\theoremstyle{definition}
\newtheorem{theorem}{Theorem}[section]
\newtheorem{lemma}[theorem]{Lemma}
\newtheorem{proposition}[theorem]{Proposition}
\newtheorem{corollary}[theorem]{Corollary}
\newtheorem{remark}[theorem]{Remark}
\newcommand{\ip}[2]{{\langle#1},{#2\rangle }}
\newcommand{\der}[1]{\mathrm{#1}}
\newcommand{\R}{{\mathbb R}}
\newcommand{\C}{{\mathbb C}}
\newcommand{\F}{{\mathcal F}}
\newcommand{\norm}[1]{\left\lVert#1\right\rVert}
\begin{document}
\title{Convexity on manifolds without focal points and applications}
\author{Aprameyan Parthasarathy and B Sivashankar}

\address{Department of Mathematics, Indian Institute of Technology, Madras, India}
\email{aprameyan@iitm.ac.in, Sivashankar.B@outlook.com}

\begin{abstract}
   
    \small{ In this article, we study strictly convex functions on Riemannian manifolds without focal points, a broad class of manifolds encompassing all Hadamard manifolds as well as a large collection of manifolds whose sectional curvatures change sign. Using geometrically defined convex functions on such manifolds, we derive interesting consequences such as the continuity of the isoperimetric profile function without conditions on the sectional curvatures; if the manifolds are also K\"ahler, we obtain Steinness as well as a lower bound on the volume growth of metric balls. Our primary applications concern the spectrum of the Laplacian. We prove that the absolutely continuous part of the spectrum contains a certain infinite interval assuming only the existence of a point with respect to which the radial curvatures are nonpositive. This yields a generalization of the corresponding result for Hadamard manifolds. We use the geometry at infinity to give a new construction of a strictly convex function. We then apply this to show that the spectrum is purely absolutely continuous on a class of manifolds for which the horospheres in every direction at a single point have constant mean curvatures (e.g. asymptotically harmonic manifolds, symmetric spaces of noncompact type). Finally, we show the equality of Cheeger's constant and the volume entropy for a broad class of manifolds.}
    
\end{abstract}

\keywords{absolutely continuous spectrum, convexity, focal points, isoperimetric profile, Cheeger's constant, volume entropy.}

\maketitle

\section{Introduction}\label{sec1}

     \noindent Convex functions have long played a fundamental role in Riemannian geometry. A  real-valued function on a Riemannian manifold is (strictly) convex if its Hessian is positive (definite) semidefinite everywhere. Under appropriate curvature conditions there are natural geometrically defined convex functions on a Riemannian manifold; profound structural results arise from the interaction of curvature and convexity. On simply connected Riemannian manifolds of nonpositive sectional curvature, called Hadamard manifolds, the square of the distance function from a fixed but arbitrary point and the Busemann functions are examples of convex functions which play a crucial role in the geometry of these manifolds. While the former is a smooth strictly convex function, the latter is merely a $C^2$- convex function \cite[\S 1.6.6]{Eberlein:1996}. In the nonnegative sectional curvature situation for instance, a key idea in the fundamental work \cite{CheegerGromoll:1972} is the construction of suitable continuous convex functions related to Busemann functions and having appropriate additional properties. Indeed, the mere existence or nonexistence of convex functions have geometric and topological implications. Nonconstant convex functions on complete Riemannian manifolds exist only on those with infinite volume and so they don't exist on compact Riemannian manifolds. If a Riemannian manifold admits a strictly convex function, then it cannot contain compact immersed minimal submanifolds and hence no closed geodesics. In fact, in such a situation, each geodesic leaves every compact subset after a finite time. Here we study the effect of strict convexity on the quantum analogue for this classical phenomenon captured by the absolute continuity of the spectrum of the Laplacian. This is of primary interest from the point of view of asymptotic completeness and quantum mechanical scattering \cite[\S XIII.6, XIII.7]{ReedSimon:1978}. Thus our main goal is to understand the nature of the spectrum of the Laplacian vis-\`a-vis the absence of point spectrum (bound states) and the singularly continuous spectrum. We establish this for a large class of manifolds without assumptions on the sign of the sectional curvatures. \\
     
    \noindent The study of the spectrum of the Laplacian on complete Riemannian manifolds has a long, rich history and there are very many results, often with various additional constraints placed on the structure of the manifold and on its curvature. For instance, in the case of manifolds of nonpositive curvature the essential spectrum was shown to be an infinite interval depending on the dimension and sectional curvatures at infinity in \cite[Theorem 6.3]{Donnelly:Essential}, using the distance function. One can then ask interesting questions about the finer structure of the spectrum e.g., whether there are eigenvalues in the essential spectrum. For a succinct recent overview of some of these aspects, we refer to \cite{Donnelly:2010}. We now content ourselves by mentioning a few results which use convexity in an essential manner to deduce refined information on the nature of the spectrum and to put our work in context, with no pretensions to completeness. In the case of nonnegative sectional curvatures, explicit convex functions have also been used to show that there are no $\mathrm{L}^2$-eigenfunctions under the additional assumptions on the codimension of the soul of the manifold being $1$ or having a quadratic decay of radial curvatures when the soul is just a point \cite[Theorems 4.2, 4.3]{Escobar:1992}. On Hadamard manifolds, \cite[Theorem 1]{Xavier:1988} exhibited a strictly convex function related to the square of the distance function satisfying the estimates required to ensure the nontriviality of the absolutely continuous spectrum. However, this is not sufficient to exclude the existence either of eigenvalues or of the  singularly continuous spectrum. In what follows, we show that the absolute continuity of the spectrum holds for several interesting classes of manifolds. \\

    \noindent In this article, we consider convex functions on complete Riemannian manifolds $M$ whose Jacobi fields satisfy certain growth properties, while making no assumptions on sectional curvatures. A simply connected Riemannian manifold is said to have no focal points if the norm of any nontrivial Jacobi field  which vanishes at the initial point of a geodesic ray is strictly increasing. Since the exponential map at any point on such a manifold is a diffeomorphism, i.e., every point is a pole, they are necessarily noncompact. This class contains all Hadamard manifolds; however, it is well known that there are manifolds without focal points whose sectional curvatures change sign \cite{Gulliver:1975}. While several aspects of the geometry of these manifolds have been well-studied, from understanding the dynamics of the geodesic flow (\cite{Eberlein:1972}, \cite{Sullivan:1974}, \cite{Eschenburg:1977}), we study some of these using the convexity of certain geometrically defined functions on them.\\
    
    \noindent The square of the distance function from an arbitrary but fixed point on such manifolds is smooth and strictly convex (\cite{Eberlein:1972}). It is suitably modified to ensure a bounded gradient, an aspect that is pivotal in many applications. For example, our first application concerns the continuity of the isoperimetric profile function $\mathcal{I}_M$ defined on the manifold $M$. This function maps every positive volume to the infimum of the perimeters of sets in $M$ having that volume. Examples are discussed under sectional curvature conditions in \cite{Ritore:2017}, namely, Hadamard manifolds and manifolds with strictly positive sectional curvatures. We observe in Proposition \ref{isoperimetric profile} that for simply connected Riemannian manifolds without focal points the map $\mathcal I_M$ is a nondecreasing continuous function, thus giving a class of examples without constraints on sectional curvatures. Strictly convex functions on K\"ahler manifolds can be seen to be strictly plurisubharmonic functions. So, manifolds without focal points which are also K\"ahler, admit strictly plurisubharmonic functions. From this, we observe in Proposition \ref{Stein} that such manifolds are Stein using Grauert's proof of Levi's problem \cite[\S 4]{GreeneWu:1977}. By composing with a suitable strictly convex function on $\mathbb R$, we get a strictly convex exhaustion function with bounded gradient. This is applied in Proposition \ref{volume growth} to obtain a lower bound for the growth of the volume of metric balls following \cite[Lemma 4.2 (i)]{NiTam:2003} without imposing any bounds on curvature.  \\

    \noindent We now proceed to describe our first spectral-theoretic result. A key idea is due to \cite{Xavier:1988} showing that on complete Riemannian manifolds supporting a sufficiently regular strictly convex function, Kato's theory of $H$- smooth operators can be applied to understand the spectrum of the Laplacian. In the case of Hadamard manifolds, this was done in \cite[Theorem 1]{Xavier:1988} under suitable bounds on the Laplacian and its derivatives to conclude that the absolutely continuous part of the spectrum was nontrivial. In fact, it contains an interval that depends explicitly on these bounds. In the present work, we extend the implementation of $H$- smoothing to simply connected Riemannian manifolds without focal points  in Theorem \ref{radial absolutely continuous}. Assuming only the nonpositivity of radial curvatures with respect to a point and bounds on Laplacian and the bi-Laplacian, we show that the absolutely continuous spectrum is an infinite interval which depends on the bounds. This is carried out by applying Lemma \ref{hessian bounds} where we obtain the required quantitative estimates on the Hessian of distance function when the manifold necessarily has no focal points.  For context, we mention some of the work that derive information about the spectrum from the behaviour of the radial curvatures. Absence of point spectrum has been shown for manifolds with a pole satisfying quadratic decay of radial curvatures \cite[Theorem 1.5]{Donnelly:2010}. Under linear decay of radial curvatures it was proved in \cite{Kumura:2010} that there are no eigenvalues embedded in the essential spectrum.\\
    
    \noindent In view of Theorem \ref{radial absolutely continuous}, we can ask for more refined properties of the spectrum such as identifying whether the point spectrum or the singularly continuous spectrum are nontrivial. A sufficient condition for the absence of point spectrum and the singularly continuous spectrum is given by the existence of a strictly convex function whose bi-Laplacian is nonpositive \cite[Corollary to Theorem 2]{Xavier:1988}. As noted there, this seemingly unnatural additional condition is, in fact, necessary. Indeed, if this condition on the bi-Laplacian fails, Hadamard manifolds with eigenvalues in the spectrum are constructed. This sufficiency condition seems to us not to be approachable by the use of distance function. \\

    \noindent To overcome this, we now turn our attention to the Busemann functions associated to unit vectors in the tangent bundle of $M$. On simply connected Riemmanian manifolds without focal points, these are $C^2$- convex \cite[Theorem 2]{Eschenburg:1977} but not strictly convex. We remedy the situation by giving a new construction of a strictly convex function from Busemann functions. This was motivated by the idea outlined in the context of symmetric spaces of noncompact type in \cite[p. $657$]{DonnellyXavier:2006}. Using the geometry at infinity we construct a strictly convex function in Theorem  \ref{strictly convex function} by averaging over Busemann functions. A careful analysis of Jacobi tensors and its relation to the Hessian of Busemann functions is central to these considerations. As  principal applications of Theorem \ref{strictly convex function}, we show that the spectrum $\sigma(\Delta)$ of the Laplacian $\Delta$ is purely absolutely continuous (i.e., there are no $\mathrm{L}^2$-eigenvalues and there is no singularly continuous spectrum) for an interesting class of manifolds. These are simply connected Riemannian manifolds without focal points in which all the horospheres have constant mean curvatures in every direction at a single point. This include all well-known examples such as Damek-Ricci spaces or more generally harmonic manifolds without focal points as well as Riemannian symmetric spaces of noncompact type and higher rank. We give an elegant proof for the equality of spectrum  in the context of harmonic manifolds without focal points by making use of the radial Fourier transform (\cite{Biswas:2021}) and certain estimates on the $\mathrm{c}$-function related to the Plancherel measure.\\

    \noindent In the last section, we show that the two fundamental geometric invariants namely, Cheeger's constant and the volume entropy coincide for a broad class of manifolds for which the horospheres in every direction at a single point have nonnegative constant mean curvatures  (Theorem \ref{cheeger and bottom}). In particular, equality holds in Cheeger's inequality for the bottom of the spectrum, expressed in terms of these constant mean curvatures. This generalizes the results obtained for asymptotically harmonic manifolds which are with bounded asymptote \cite[Theorem 5.4]{KnieperPeyerimhoff:2015} and that of symmetric spaces of noncompact type \cite[Theorem 1]{Wang:2015}. 
    
	\section{Preliminaries}
	
    \noindent In this section, we give a brief summary of the basic geometric setup needed in the sequel. Let $M$ be a complete Riemannian manifold and let $\gamma$ be a geodesic in $M$. A vector field $Y$ along $\gamma$ is called a Jacobi field if it satisfies the Jacobi equation
	\begin{equation} \label{Jacobi field equation}
		Y''+R (\gamma',Y)\gamma'=0.
	\end{equation}
	Here $\gamma'$ is the tangent vector field of $\gamma$, $R$ is the Riemann curvature tensor on $M$ and $Y'$ denotes the covariant derivative of $Y$  with respect to $\gamma'$ along $\gamma$. The prototypical example of a Jacobi field is the variation vector field of a geodesic variation. Two points on $\gamma$ are said to be conjugate along $\gamma$ if there exists a nonzero Jacobi field which vanishes at both the points. $M$ is called  a manifold without conjugate points if for every geodesic $\gamma$ in $M$, no two points on $\gamma$ are conjugate along $\gamma$ or equivalently, if the exponential map is non-singular at every point in $M$. In fact, $M$ has no conjugate points if and only if $\exp_p: T_pM  \to M $ is a covering map for each $p$ in $M$. So if $M$ is also simply connected, then $\exp_p$ is a diffeomorphism for all $p\in M$ \cite[Theorem 1]{Sullivan:1974} and hence $M$ is necessarily noncompact. In this case, any two distinct points of $M$ are joined by a unique geodesic. \\
	
    \noindent A simply connected Riemannian manifold $M$ is said to have no focal points if for every nontrivial Jacobi field $Y$ satisfying $Y(0)=0$, the map $t \mapsto \norm{Y(t)}$ is strictly increasing for $t>0$ \cite[Proposition 4]{Sullivan:1974}. In particular, any nontrivial Jacobi field cannot vanish more than once and hence every manifold without focal points is also a manifold without conjugate points. Note that all Hadamard manifolds are manifolds without focal points since $t \mapsto \norm{Y(t)}$ is strictly convex \cite[Theorem 4]{Sullivan:1974}. \\
   
    \noindent For some applications, it is not just convenient but even important to consider the full vector space of normal Jacobi fields along a unit speed geodesic together rather than consider them one at a time. A Jacobi tensor is the tool which does this. We now give a brief overview of Jacobi tensors and refer to \cite{Eschenburg:1977} for a full account. Let $M$ be a simply connected Riemannian manifold without conjugate points and let ($SM$) $TM$ denote its (unit) tangent bundle. For $v \in SM$, let $\gamma_{_v}$ denote the unit speed geodesic determined by $v$. A normal $(1,1)$- tensor $Y_v$ along the geodesic $\gamma_{_v}$ is a bundle endomorphism of the normal bundle $N\gamma_{_v}$ over $\gamma_{_v}$ given by 
	\begin{align*}
		Y_v:\R \to \mathrm{End}N(\gamma_{_v})=\bigcup\limits_{t \in \R}\mathrm{End}(N(\gamma_{_v}(t))),\\
         N(\gamma_{_v}(t)):=\left\{w \in T_{\gamma_{_v}(t)}M:\ip{w}{\gamma_{_v}'(t)}_{\gamma_{_v}(t)}=0 \right\} . 
	\end{align*}
    A Jacobi tensor $Y_v$ is a normal $(1,1)$- tensor field along a geodesic $\gamma_{_v}$ with transversal derivative (i.e., there exists some $t$ such that $ker~Y_v(t) \cap ker~Y_{v}'(t)=0$) satisfying the Jacobi equation
	\begin{equation}\label{Jacobi tensor equation}
		Y_{v}''(t)+R(t) Y_v(t)=0.
	\end{equation}
	Here $R(t)$ is the symmetric $(1,1)$- tensor along $\gamma_{_v}$ induced by the curvature tensor $R$ and defined by
	\begin{equation*}
		R(t)w:=R(\gamma_{_v}'(t),w)\gamma_{_v}'(t), \quad w \in T_{\gamma_{_v}(t)}M
	\end{equation*}
	and $Y_{v}'$ is the covariant derivative of the tensor $Y_v$ along $\gamma_{_v}$ with respect to the tangent vector field $\gamma_{_v}'$. \\

    \noindent Lagrange tensors form an important subspace of the space of Jacobi tensors along a geodesic $\gamma_{_v}$, consisting of those Jacobi tensors $A_v$ whose Wronskian $W(A_v, A_v)$ vanishes \cite[\S 1]{Eschenburg:1977}.  If we have a nowhere singular Lagrange tensor $A_v$, then we can compute each Jacobi tensor $Y_v$ along the same geodesic $\gamma_{_v}$ from its initial or boundary values. If $\gamma_{_v}$ has no conjugate points, then the Lagrange tensor $A_v(t)$ given by the initial conditions $A_v(0)=0$ and $A_{v}'(0)=\mathrm{Id}$ is nonsingular for all $t \neq 0$ and determines the shape tensor of geodesic sphere centred at $\gamma_{_v}(0)$ at the point $\gamma_{_v}(t)$ .  \\
    
    \noindent For each $s >0$ and $v \in SM$, we also consider the Lagrange tensors $D_{s,v}$ associated to geodesic spheres centred at $\gamma_{_v}(s)$. These are solutions to the Jacobi equation \eqref{Jacobi tensor equation} with the boundary conditions $D_{s,v}(0)=\mathrm{Id}$ and $D_{s,v}(s)=0$. Then $D_v:=\lim\limits_{s \to \infty}D_{s,v}$ exists and is referred to as the stable Jacobi tensor $D_v$ satisfying $D_v (0)=\mathrm{Id}$. A simply connected Riemannian manifold without conjugate points is said to be a manifold of continuous asymptote if $v \to D_{v}'(0)$ is a continuous section of the vector bundle $\{(v,\phi): v \in SM, \phi \in \mathrm{End}(N(\gamma_{_v})(0)) \}$ over $SM$.
    For such manifolds, \cite[\S 11]{KnieperPeyerimhoff:2013} gives a compactification known as the Busemann compactification. Let $o \in M$ be an arbitrary but fixed point. Then the Busemann boundary $\partial^o M$ is homeomorphic to $S_pM$ for any $p \in M$.\\ 

    \noindent Convexity being the theme of this article, the two natural geometrically defined functions which under suitable constraints on curvature are convex are the distance function and the Busemann function defined below. Given $t\in [0, \infty)$ and $v \in SM$, let $b_{t, v}:M\to \mathbb R$ denote the function defined by
	\begin{equation*}
		b_{t,v}(x):=d(x,\gamma_{_v}(t)) - t
	\end{equation*}
	which, by triangle inequality, is increasing as a function of $t$. Since $b_{t,v}(\cdot)$ is absolutely bounded by $d(\gamma_{_v}(0),\cdot)$, the function 
	\begin{equation*}
		b_v:=\lim\limits_{t \to \infty}b_{t,v}
	\end{equation*}
	is defined everywhere on $M$. The function $b_v$ is called the Busemann function associated to $v\in SM$. The level sets of Busemann functions are called horospheres. Horospheres are closed and hence are complete hypersurfaces with the induced metric. It is classical that for Hadamard manifolds the Busemann functions are $C^2$- convex functions \cite[\S 1.6.6]{Eberlein:1996}. As observed in \cite{Eschenburg:1977}, the curvature restrictions can be relaxed by making use of convergence properties of Jacobi tensors, thereby generalizing the $C^2$- regularity of horospheres to manifolds of continuous asymptote. \\
     
    \noindent In order to obtain this, the following relation between the Hessian of the Busemann function $b_v$ and the stable Jacobi tensor $D_v$ for each $v \in SM$ is used. Given $v \in SM$ and $q \in M$ there exists $w \in S_qM$ asymptotic to $v$ given via the gradient of the Busemann function i.e., $w=-\nabla b_v(q)$ \cite[Proposition 1]{Eschenburg:1977}. We first make a small observation. The orthogonal projection $x^{\perp}(p,v)$ of $x \in T_pM$ onto $(\nabla b_v(p))^{\perp}$ will be denoted by $x^{\perp}$ for notational convenience.
	\begin{lemma}\label{geodesic}
		For $p \in M$ and $x \in S_pM$, we have $\mathrm{Hess}~b_v(p)(x,x)=\mathrm{Hess}~b_v(p)(x^{\perp},x^{\perp})$.
	\end{lemma}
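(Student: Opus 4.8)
The plan is to use the fact that the Busemann function $b_v$ is ``distance-like'', i.e.\ its gradient is a unit vector field, so that the integral curves of $\nabla b_v$ are geodesics and $\mathrm{Hess}~b_v$ vanishes in the direction of $\nabla b_v$. The lemma then follows by decomposing $x$ into its components along and orthogonal to $\nabla b_v(p)$ and expanding the Hessian by bilinearity.

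First I would record the standard fact that on a simply connected manifold without focal points---being of continuous asymptote---the Busemann function $b_v$ is of class $C^2$ and satisfies $\norm{\nabla b_v}\equiv 1$ on $M$. This is already implicit in the $C^2$-regularity of horospheres recalled above: each approximant $b_{t,v}=d(\cdot,\gamma_v(t))-t$ has unit gradient wherever it is smooth, since the Riemannian distance to a fixed point does, and these gradients converge to $\nabla b_v$ (with $-\nabla b_v(p)$ the initial velocity of the ray from $p$ asymptotic to $v$, a unit vector).

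Next, differentiating the identity $\ip{\nabla b_v}{\nabla b_v}=1$ along an arbitrary $X\in T_pM$ and using $\mathrm{Hess}~b_v(p)(X,Y)=\ip{\nabla_X\nabla b_v}{Y}$ gives $2\,\mathrm{Hess}~b_v(p)(X,\nabla b_v(p))=0$; by symmetry of the Hessian, $\mathrm{Hess}~b_v(p)(\nabla b_v(p),Y)=0$ for all $Y\in T_pM$. Writing $x=x^{\perp}+\ip{x}{\nabla b_v(p)}\,\nabla b_v(p)$ and expanding by bilinearity,
\begin{align*}
\mathrm{Hess}~b_v(p)(x,x)&=\mathrm{Hess}~b_v(p)(x^{\perp},x^{\perp})+2\,\ip{x}{\nabla b_v(p)}\,\mathrm{Hess}~b_v(p)(x^{\perp},\nabla b_v(p))\\
&\qquad+\ip{x}{\nabla b_v(p)}^{2}\,\mathrm{Hess}~b_v(p)(\nabla b_v(p),\nabla b_v(p)),
\end{align*}
and the last two terms vanish by the preceding sentence, which gives the claim.

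I do not anticipate a genuine obstacle: the only point requiring care is the $C^2$-regularity of $b_v$ together with $\norm{\nabla b_v}=1$, and this is precisely Eschenburg's result for manifolds of continuous asymptote recalled above, so it may simply be invoked rather than reproved. The remainder is the elementary linear-algebra observation that the Hessian of a function with unit gradient annihilates that gradient.
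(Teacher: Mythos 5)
Your proof is correct and follows essentially the same route as the paper: both arguments use the symmetry of the Hessian together with $\norm{\nabla b_v}=1$ to show that $\mathrm{Hess}~b_v$ annihilates $\nabla b_v$ (the paper phrases this as $\nabla_{\nabla b_v}\nabla b_v=0$), and then conclude by the orthogonal decomposition of $x$ and bilinearity. Your version merely writes out the cross terms explicitly, which the paper leaves implicit.
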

	\begin{proof}
		As $\nabla b_v$ is a $C^1$- vector field with $\norm{\nabla b_v}=1$, we have that $\nabla_{\nabla b_v}\nabla b_v=0$. Indeed $\ip{\nabla_X \nabla b_v}{Y}= \mathrm{Hess}~b_v(X,Y)$ is symmetric in $X,Y$ and hence for every vector field $X$
		\begin{align*}
			\ip{\nabla_{\nabla b_v}\nabla b_v}{X} =\ip{\nabla_X \nabla b_v}{\nabla b_v} =\frac{1}{2}X\ip{\nabla b_v}{\nabla b_v}=0.
		\end{align*}
		In particular, we see that every integral curve of $\nabla b_v$ is a geodesic and hence complete. Then, the orthogonal decomposition $T_pM = \R \nabla b_v(p) \oplus (\nabla b_v(p))^{\perp}$ enables us to write $x=\ip{x}{\nabla b_v(p)}_{T_pM}\nabla b_v(p)+x^{\perp}$. Using $\nabla_{\nabla b_v}\nabla b_v=0$ and the bilinearity of $\mathrm{Hess}~b_v$ we have 
		\begin{equation*}
			\mathrm{Hess}~b_v(p)(x,x)=\mathrm{Hess}~b_v(p)(x^{\perp},x^{\perp}).
		\end{equation*}
	\end{proof}
    
	\noindent To the best of our knowledge, it is not known if the Busemann functions are even convex for an arbitrary manifold of continuous asymptote. Using Lemma \ref{geodesic} and \cite[Proposition 2, Theorem 1]{Eschenburg:1977} we have the following relation between the stable Jacobi tensors and the Busemann functions in the case of manifolds of continuous asymptote as in \cite[Theorem 1]{Eschenburg:1977}. In this case, we have for $p \in M$, $v \in SM$, and $x \in T_pM$ \cite[p. 155]{Knieper:2016},
	\begin{equation}\label{hessian}
		\mathrm{Hess}~{b_{v}}(p)(x,x)  = \ip{D_{-\nabla b_v(p)}'(0)x^{\perp}}{D_{-\nabla b_v(p)}(0)x^{\perp}}.
	\end{equation}
    
    \noindent A manifold is said to have bounded asymptote \cite[Proposition 5]{Eschenburg:1977} if there exists a uniform bound $k \geq 1$ for the stable Jacobi tensors namely,
        \begin{equation}\label{bounded asymptote}
        \norm{D_v(t)} \leq k, \quad \forall ~ v \in SM, ~ t \geq  0.
        \end{equation}
        As observed in \cite{Eschenburg:1977} and stated explicitly in \cite[Remark 10]{KnieperParkPeyerimhoff:2025}, these are necessarily manifolds of continuous asymptote. Note that simply connected manifolds without focal points fall in this class with $k=1$ \cite[\S 5]{Eschenburg:1977}. Except for Section $5$ where we will be in a much broader class, we shall make use of the foregoing discussion for simply connected Riemannian manifolds without focal points and henceforth make this assumption.  \\

   \section{Some applications of convex functions in manifolds without focal points}
    
    \noindent In this section, we consider strictly convex functions on a simply connected manifold $M$ without focal points derived from the distance function. For a fixed $p \in M$, the function $q\mapsto d^2(p, q)$ is smooth and strictly convex where $d(p,q) = : r(q)$ is the distance function. This is proved by an analysis of Jacobi fields as follows. Note that the normalized distance-squared function $u(q)=\frac{1}{2}r^2(q)$ is smooth on $M$. The Hessian of $u$ is given by
   
    \begin{equation*}
        \mathrm{Hess}~u  = \mathrm{d}r^2 + r\mathrm{Hess} ~r.
    \end{equation*}
    Given $v \in T_pM$, consider the Jacobi field along $t \mapsto \exp_p(tv)$ satisfying $Y_v(0)=0$. Then a routine calculation yields,
   
    \begin{equation} \label{hessian of distance function}
        \mathrm{Hess}~r(\exp_p(tv)) (Y_v(t),Y_v(t)) = \ip{Y_{v}'(t)}{Y_v(t)} = \frac{1}{2}\frac{d}{ds}\Big|_{s=t}\norm{Y_v(s)}^2.
    \end{equation}
    
   \noindent Since $M$ has no focal points, $d\exp_p$ is nonsingular at $tv$ and for any $w \in T_qM$ there exists a Jacobi field $Y_v$ along $\gamma_{_v}$ such that $Y_v(0)=0$ and $Y_v(t)=w$. Using \cite[Theorem 4]{Sullivan:1974} we see that $\mathrm{Hess}~ r >0$ for $t > 0$ and so $u$ is strictly convex on $M$ and we refer to \cite{Eberlein:1972} for more details. \\

   \noindent Now we construct, in the usual way, a smooth strictly convex function on $M$ whose gradient is bounded by taking $f(q)=\sqrt{1+2u(q)}$; then $\norm{\nabla f} \leq 1$ and
   \begin{align*}
       \mathrm{Hess}~f & = \frac{1}{(1+2u)^{\frac{3}{2}}}\mathrm{d}r^2 + \sqrt{\frac{2u}{1+2u}} \mathrm{Hess}~r
   \end{align*}
   is positive definite as $\mathrm{Hess}~r$ is strictly positive showing that $f$ is strictly convex. A real-valued function $\varphi$ on a manifold is called an exhaustion function if all its sub-level sets $\{\varphi\leq c\}$ are compact. Notice that the sub-level sets of $f$ are given by
   \begin{equation*}
     f^{-1}(c) =
       \begin{cases}
           \emptyset, & \text{if} ~ c<1 \\
           \{p\}, &  \text{if}~ c = 1 \\
           B(p,c), & \text{if} ~ c > 1
       \end{cases}
   \end{equation*}
   and hence are all compact. Here $B(p, c)$ denotes the closed ball of radius $c$ centred at $p$. Thus $f$ is also an exhaustion function on $M$. For convenience, we summarize this discussion as a lemma.
   \begin{lemma}\label{exhaustion function}
       Let $M$ be a simply connected Riemannian manifold without focal points. Then it possesses a smooth strictly convex function which is Lipschitz and an exhaustion.
   \end{lemma}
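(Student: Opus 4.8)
The plan is to follow the route indicated in the discussion preceding the statement, organised into three stages: produce a smooth strictly convex function from the distance, renormalise it to bound the gradient, and then verify the exhaustion property. First I would fix an arbitrary $p \in M$ and set $r = d(p,\cdot)$ and $u = \tfrac{1}{2}r^{2}$. Since $M$ is simply connected without conjugate points, $p$ is a pole, so $r$ is smooth on $M \setminus \{p\}$ and $u$ is smooth on all of $M$. Using $\mathrm{Hess}\,u = \mathrm{d}r\otimes\mathrm{d}r + r\,\mathrm{Hess}\,r$ together with \eqref{hessian of distance function}, for the Jacobi field $Y$ along a unit-speed geodesic issuing from $p$ with $Y(0) = 0$ one has $\mathrm{Hess}\,r(Y,Y) = \ip{Y'}{Y} = \tfrac{1}{2}\tfrac{d}{ds}\norm{Y(s)}^{2}$, which is strictly positive for $s > 0$ precisely because $M$ has no focal points (the norms of such Jacobi fields are strictly increasing, \cite[Theorem 4]{Sullivan:1974}). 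As $\mathrm{d}\exp_{p}$ is nonsingular, every nonzero tangent vector at a point $\exp_{p}(sv)$ with $s>0$ arises as the value $Y(s)$ of such a Jacobi field, so $\mathrm{Hess}\,r > 0$ on $M\setminus\{p\}$; hence $\mathrm{Hess}\,u > 0$ there, and at $p$ one reads off $\mathrm{Hess}\,u(p) = g_{p}$ from $u = \tfrac12\norm{\exp_{p}^{-1}(\cdot)}^{2}$. Thus $u$ is smooth and strictly convex on $M$.

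Next I would set $f = \sqrt{1+2u}$. Since $\norm{\nabla u} = \norm{r\,\nabla r} = r$ and $1+2u = 1+r^{2}$, one gets $\norm{\nabla f}^{2} = \norm{\nabla u}^{2}/(1+2u) = r^{2}/(1+r^{2}) \leq 1$, so the gradient is bounded. Differentiating gives, on $M\setminus\{p\}$,
\[
\mathrm{Hess}\,f = (1+2u)^{-3/2}\,\mathrm{d}r\otimes\mathrm{d}r + \sqrt{\tfrac{2u}{1+2u}}\;\mathrm{Hess}\,r ,
\]
where the second summand is positive semidefinite with kernel exactly $\R\nabla r$ (because $\mathrm{Hess}\,r(\nabla r,\cdot)=0$) and the first summand is positive on $\R\nabla r$; decomposing a tangent vector into its $\nabla r$-component and its orthogonal part shows $\mathrm{Hess}\,f > 0$ away from $p$, while $\mathrm{Hess}\,f(p) = \mathrm{Hess}\,u(p) > 0$ since the first-order Taylor coefficient of $\sqrt{1+2t}$ at $t=0$ is $1$. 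Hence $f$ is smooth and strictly convex with $\norm{\nabla f}\le 1$.

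Finally I would observe that $\{f\le c\}$ is empty for $c<1$ and equals the closed metric ball $B(p,\sqrt{c^{2}-1})$ for $c\ge 1$; by the Hopf--Rinow theorem these are compact, so $f$ is an exhaustion function, which proves the lemma. The only place where the geometric hypothesis is used in an essential way is the strict monotonicity of the norms of Jacobi fields vanishing at $p$, which yields $\mathrm{Hess}\,r>0$; the renormalisation producing the bounded gradient and the identification of the sub-level sets are routine, so the ``main obstacle'' is really just isolating and invoking that monotonicity correctly, i.e.\ \cite[Theorem 4]{Sullivan:1974}.
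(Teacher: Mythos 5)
Your proof is correct and follows essentially the same route as the paper: strict convexity of $u=\tfrac12 r^2$ via Jacobi fields and the no-focal-points condition, the renormalisation $f=\sqrt{1+2u}$ to bound the gradient, and identification of the sub-level sets as closed balls. In fact you are slightly more careful than the paper's exposition in two spots --- noting that $\mathrm{Hess}\,r$ vanishes on $\R\nabla r$ so that the $\mathrm{d}r\otimes\mathrm{d}r$ term is genuinely needed for positive definiteness, and giving the correct radius $\sqrt{c^2-1}$ of the sub-level ball --- but these are refinements, not a different argument.
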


    \noindent As a first application we have the following result on the isoperimetric profile function $\mathcal{I}_M$ on $M$.  The isoperimetric profile of a Riemannian manifold is the function that assigns to a given positive volume the infimum of the perimeter of the sets of this volume. We remark here that regularity properties of this function are of interest and refer to the introduction in \cite{Ritore:2017} for a detailed discussion. The continuity and the nondecreasing nature of the isoperimetric profile is shown for complete manifolds which possess smooth strictly convex Lipschitz exhaustion functions \cite[Theorem 3.1]{Ritore:2017}. It then follows from a theorem of Lebesgue that $\mathcal{I}_M$ is differentiable almost everywhere on such manifolds. As examples there, Hadamard manifolds and manifolds with strictly positive sectional curvatures are discussed. We give below a class of examples for which the continuity of $\mathcal I_M$ holds while making no assumptions on curvature and which properly contains Hadamard manifolds. As noted before this class contains plenty of examples whose sectional curvatures do not have a fixed sign \cite{Gulliver:1975}.
     \begin{proposition}\label{isoperimetric profile}
        Simply connected Riemannian manifolds without focal points have a continuous nondecreasing isoperimetric profile function. In particular, they are differentiable almost everywhere. 
    \end{proposition}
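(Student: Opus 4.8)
The plan is to combine Lemma~\ref{exhaustion function} with the regularity criterion of \cite[Theorem 3.1]{Ritore:2017}, which asserts that a complete Riemannian manifold admitting a smooth strictly convex Lipschitz exhaustion function has a continuous and nondecreasing isoperimetric profile. By Lemma~\ref{exhaustion function}, a simply connected Riemannian manifold $M$ without focal points carries a smooth strictly convex exhaustion function $f$ with $\norm{\nabla f}\leq 1$. The only point that needs to be addressed before invoking Ritoré's theorem is that this bound on the gradient forces $f$ to be globally Lipschitz with respect to the Riemannian distance, so that $f$ qualifies as an admissible test function in the sense of \cite{Ritore:2017}.

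First I would verify this elementary fact. Since $M$ is complete, given $p,q\in M$ there is a minimizing unit-speed geodesic $\gamma:[0,L]\to M$ with $\gamma(0)=p$, $\gamma(L)=q$ and $L=d(p,q)$, and then
\begin{equation*}
   |f(p)-f(q)| = \left\lvert \int_0^L \frac{d}{dt} f(\gamma(t))\, dt \right\rvert = \left\lvert \int_0^L \ip{\nabla f(\gamma(t))}{\gamma^{'}(t)}\, dt \right\rvert \leq \int_0^L \norm{\nabla f(\gamma(t))}\, dt \leq L = d(p,q),
\end{equation*}
so $f$ is $1$-Lipschitz. Hence $f$ satisfies all the hypotheses of \cite[Theorem 3.1]{Ritore:2017}, and consequently $\mathcal I_M$ is continuous and nondecreasing on its domain $(0,\infty)$ (recall $M$ has infinite volume, as it supports a nonconstant convex function).

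For the final assertion, a nondecreasing real-valued function on an open interval of $\R$ is differentiable almost everywhere by Lebesgue's monotone differentiation theorem; applied to $\mathcal I_M$ on $(0,\infty)$, this yields the claim.

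I expect no serious obstacle here: the geometric substance is entirely carried by Lemma~\ref{exhaustion function}, whose proof rests on the analysis of Jacobi fields on manifolds without focal points, and the present proposition is a formal consequence of it together with the cited theorems. The only care needed is to confirm that the notion of ``smooth strictly convex Lipschitz exhaustion function'' used in \cite{Ritore:2017} matches exactly what Lemma~\ref{exhaustion function} provides — in particular that completeness of $M$, which is part of our standing assumptions on Riemannian manifolds, is the ambient hypothesis there — and this is straightforward to check.
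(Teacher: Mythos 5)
Your proposal is correct and follows exactly the paper's route: invoke Lemma~\ref{exhaustion function} to produce a smooth strictly convex exhaustion function with bounded gradient and then apply \cite[Theorem 3.1]{Ritore:2017}, with the almost-everywhere differentiability coming from Lebesgue's theorem on monotone functions. The only difference is that you spell out the elementary step that a bounded gradient on a complete manifold gives the global Lipschitz bound, which the paper leaves implicit.
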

     \begin{proof}
    By Lemma \ref{exhaustion function}, $M$ admits a smooth strictly convex Lipschitz exhaustion function.  Invoking  \cite[Theorem 3.1]{Ritore:2017}, we see that $\mathcal{I}_M$ is nondecreasing and continuous.
     \end{proof}
    \noindent Regularity for $\mathcal{I}_M$ has many interesting consequences; for instance, we have the following result whose proof makes use of standard arguments.
    \begin{proposition}
         Let $M$ be as in Proposition \ref{isoperimetric profile}. Suppose $v_0$ is a differentiable point of $\mathcal{I}_M$. If $\Omega$ is an isoperimetric region in $M$ of volume $v_0$, then (the regular part of) its boundary $\partial \Omega$ has constant mean curvature $H = \mathcal{I}_{M}'(v_0)$.
    \end{proposition}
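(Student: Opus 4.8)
The plan is to combine the first variation formulas for volume and perimeter with the minimising property of $\Omega$ and the assumed differentiability of $\mathcal I_M$ at $v_0$. The one genuinely nontrivial ingredient is the regularity theory for isoperimetric regions: since $\Omega$ minimises perimeter among all sets of volume $v_0$, its boundary is, away from a closed singular set $S$ of Hausdorff dimension at most $\dim M - 8$, a smooth embedded hypersurface $\Sigma := \partial\Omega \setminus S$ carrying a well-defined scalar mean curvature $H_\Sigma$ with respect to the outward unit normal $\nu$ (this is classical for perimeter minimisers on complete Riemannian manifolds, and in particular uses nothing about the curvature of $M$). Restricting to smooth deformations supported in $\Sigma$ that preserve volume to first order, the minimality of $\Omega$ forces the first variation of perimeter to vanish; the standard Lagrange-multiplier argument — if $H_\Sigma$ took different values at two regular points $q_1,q_2$, one would build a volume-preserving variation along which perimeter strictly decreases — then shows that $H_\Sigma$ equals a constant $H$ on all of $\Sigma$. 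This establishes the constant mean curvature assertion, and it remains only to identify $H$ with $\mathcal I_M'(v_0)$.

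For this, fix $q \in \Sigma$ and a nonnegative $\varphi \in C_c^\infty(\Sigma)$ supported in a small geodesic ball $B$ around $q$ with $B \cap S = \emptyset$ and $\int_{\partial\Omega}\varphi\, d\mathcal H^{n-1} = 1$. Let $X$ be a smooth vector field on $M$, compactly supported in $B$, with $X|_{\partial\Omega} = \varphi\,\nu$, let $\Phi_t$ be its flow, and set $\Omega_t := \Phi_t(\Omega)$. Since $\Phi_t$ is the identity outside $B$ and $\partial\Omega$ is smooth inside $B$, the maps $t\mapsto \mathrm{Vol}(\Omega_t)$ and $t\mapsto \mathrm{Per}(\Omega_t)$ are smooth near $t=0$, and the classical first variation formulas give
\begin{equation*}
  \frac{d}{dt}\Big|_{t=0}\mathrm{Vol}(\Omega_t) = \int_{\partial\Omega}\varphi\, d\mathcal H^{n-1} = 1, \qquad \frac{d}{dt}\Big|_{t=0}\mathrm{Per}(\Omega_t) = \int_{\partial\Omega} H_\Sigma\,\varphi\, d\mathcal H^{n-1} = H,
\end{equation*}
the last step using that $\varphi$ is supported where the mean curvature equals the constant $H$.

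Finally, put $g(t) := \mathrm{Per}(\Omega_t) - \mathcal I_M\bigl(\mathrm{Vol}(\Omega_t)\bigr)$. By the very definition of the isoperimetric profile one has $g(t) \geq 0$ for all small $t$, while $g(0) = \mathrm{Per}(\Omega) - \mathcal I_M(v_0) = 0$ because $\Omega$ is isoperimetric of volume $v_0$; hence $g$ attains an interior minimum at $t=0$. Since $\mathrm{Vol}(\Omega_t)$ and $\mathrm{Per}(\Omega_t)$ are smooth near $0$ and $\mathcal I_M$ is differentiable at $v_0$ by hypothesis (consistent with the almost everywhere differentiability of Proposition \ref{isoperimetric profile}), $g$ is differentiable at $0$, so
\begin{equation*}
  0 = g'(0) = \frac{d}{dt}\Big|_{t=0}\mathrm{Per}(\Omega_t) - \mathcal I_M'(v_0)\,\frac{d}{dt}\Big|_{t=0}\mathrm{Vol}(\Omega_t) = H - \mathcal I_M'(v_0),
\end{equation*}
which yields $H = \mathcal I_M'(v_0)$.

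The main obstacle is conceptual rather than computational: it lies entirely in justifying the geometric measure theory input of the first paragraph — existence of the reduced boundary, the dimension bound on the singular set, and the Euler–Lagrange characterisation of the regular part — on a general complete Riemannian manifold. Once that is granted, the rest is the standard Lagrange-multiplier computation above. The only subtlety requiring care is that the deformation $\Phi_t$ must be localised away from $S$ so that the classical smooth first variation formulas apply verbatim; this is possible because $\Sigma$ is open and dense in $\partial\Omega$. Note that, as in Proposition \ref{isoperimetric profile}, no hypothesis beyond completeness (in particular, nothing about focal points) enters the argument.
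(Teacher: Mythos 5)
Your argument is correct and is essentially the paper's proof: both rest on the first variation formulas for volume and perimeter together with the defining inequality $\mathrm{Per} \geq \mathcal I_M \circ \mathrm{Vol}$ and the assumed differentiability at $v_0$. The only cosmetic difference is packaging — the paper uses separate outward and inward variations to bound the two one-sided difference quotients of $\mathcal I_M$ by $H$ from above and below, whereas you run a single two-sided flow and observe that $g(t)=\mathrm{Per}(\Omega_t)-\mathcal I_M(\mathrm{Vol}(\Omega_t))$ has an interior minimum at $t=0$; you also spell out the regularity and Lagrange-multiplier input for the constant mean curvature claim, which the paper simply takes as standard.
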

       
    \begin{proof}
    Since $\Omega$ is isoperimetric region the (regular part of) $\partial \Omega$ has constant mean curvature $H$. Let $\mu, \nu$ be the outward and inward normal variational vector fields on $\partial \Omega$ with compact support. Then the first variations of the volumes and the boundary areas are as follows
         \begin{align*}
             \frac{d}{dt}V(\Omega_t)\Big|_{t=0} & = \int\limits_{\partial \Omega_0} |\mu| \mathrm{d}A, \quad \frac{d}{dt}V(\Omega_t)\Big|_{t=0} = - \int\limits_{\partial \Omega_0} |\nu| \mathrm{d}A \\
             \frac{d}{dt}A(\partial \Omega_t)\Big|_{t=0} & = H\int\limits_{\partial \Omega_0} |\mu| \mathrm{d}A, \quad \frac{d}{dt}A(\partial\Omega_t)\Big|_{t=0} = - H\int\limits_{\partial \Omega_0} |\nu| \mathrm{d}A.
         \end{align*}
    Hence, using the differentiability assumption at $v_0$,
        \begin{equation*}
            \lim\limits_{\delta v \to 0^+}\frac{\mathcal{I}_M(v_0+\delta v)-\mathcal{I}_M(v_0)}{\delta v} \leq H, \quad \lim\limits_{\delta v \to 0^-}\frac{\mathcal{I}_M(v_0+\delta v)-\mathcal{I}_M(v_0)}{\delta v}  \geq H
        \end{equation*}
        yields $\mathcal{I}_{M}'(v_0)=H$.
    \end{proof}
    
    \noindent As a consequence, if $v_0$ is a differentiable point of the isoperimetric profile function, then all isoperimetric regions in $M$ of the same volume $v_0$ must also have the same mean curvature. In general, the existence of isoperimetric domains of a given volume in a complete noncompact Riemannian manifold is a difficult problem (see \cite{Ritore:2023}). \\
    
    \noindent We now discuss a couple of direct applications in the situation when $M$ is, in addition, K\"ahler. For general facts on K\"ahler manifolds we refer to \cite[\S7]{GreeneWu:1977}. On a K\"ahler manifold the complex analogue of convex functions is given by the notion of plurisubharmonic functions. These are defined using the complex Hessian called the Levi form. In local coordinates, the Levi form of a function $\tilde{f}$ is given by
      \begin{equation*}
          L\tilde{f} = 4 \sum\limits_{i,j=1}^{n}\frac{\partial^2 h }{\partial z_i \partial \overline{z}_j} \mathrm{d}z_i \otimes \mathrm{d} \overline{z}_j
     \end{equation*}
    where $(z_1,\cdots, z_n)$ is a local holomorphic coordinate system. Let $(x_1,y_1, \cdots, x_n,y_n)$ be the corresponding normal coordinates with $z_k=x_k+iy_k$, $k=1,\cdots,n$. A real-valued function on a K\"ahler manifold is said to be (strictly) plurisubharmonic, if the Levi form is positive (definite) semi-definite. Since $M$ is K\"ahler, the Levi form $L_{\tilde{f}}$ of $\tilde{f}$  is expressed in terms of Hessian of $\tilde{f}$ using the real coordinates as
    \begin{equation*}
       L_{\tilde{f}} = \mathrm{Hess}~\tilde{f} \left(\frac{\partial }{\partial x_1},\frac{\partial }{\partial x_1} \right)+\mathrm{Hess}~\tilde{f} \left(\frac{\partial }{\partial y_1},\frac{\partial }{\partial y_1}\right). 
     \end{equation*}
    Therefore, if $\tilde{f}$ is a strictly convex $C^2$- function on a K\"ahler manifold, then it is strictly plurisubharmonic. \\

    \noindent Stein manifolds are complex manifolds which are biholomorphic to a closed complex submanifold of $\mathbb C^n$ for some $n\in\mathbb N$. This tells us that there are plenty of holomorphic functions on such manifolds. Stein manifolds are necessarily K\"ahler and a question of great interest then is to ask which K\"ahler manifolds are Stein. Typically, sufficient conditions are given by putting constraints on the sign of sectional, holomorphic bisectional or Ricci curvatures \cite[Theorem $1$]{GreeneWu:1977}. It is in this context that we observe the following sufficient condition where we make no assumptions on curvature.

    \begin{proposition}\label{Stein}
    Let $M$ be a simply connected Riemannian manifold without focal points which is also K\"ahler. Then $M$ is necessarily Stein.
    \end{proposition}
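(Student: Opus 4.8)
The plan is to produce a smooth strictly plurisubharmonic exhaustion function on $M$ and then invoke Grauert's solution of the Levi problem. The candidate is already at hand: for a fixed $p\in M$ take the normalized distance-squared function $u=\tfrac12 r^2$, $r=d(p,\cdot)$, which was shown above to be smooth and strictly convex on all of $M$ --- including at $p$, where $\mathrm{Hess}\,u$ equals the metric --- and whose sub-level sets $\{u\le c\}$ are closed metric balls, hence compact; so $u$ is a smooth strictly convex exhaustion function. (Equivalently one may use the function $f$ of Lemma~\ref{exhaustion function}.)

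Next I would invoke the observation recorded just before the statement: on a K\"ahler manifold a strictly convex $C^2$ function is strictly plurisubharmonic, since its Levi form is a sum of values of its positive-definite Hessian. Applying this to $u$ shows that $M$ carries a smooth strictly plurisubharmonic exhaustion function, and Grauert's theorem in the form stated in \cite[\S 4]{GreeneWu:1977} --- a complex manifold admitting such a function is Stein --- then gives the claim.

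The argument has essentially no obstacle once the earlier results are available: the real content lies in Lemma~\ref{exhaustion function} (whose proof uses the no-focal-points hypothesis through the strict monotonicity of $t\mapsto\norm{Y(t)}$ for Jacobi fields $Y$ with $Y(0)=0$, yielding $\mathrm{Hess}\,r>0$) and in Grauert's deep theorem. The only points needing a word of care are that one must cite the \emph{strict} plurisubharmonicity version of the Levi problem --- no extra pseudoconvexity or completeness hypothesis is required, since a strictly plurisubharmonic exhaustion automatically provides an exhaustion by strictly pseudoconvex domains --- and that the base point $p$ must be included in the domain of smoothness, which is harmless because $u=\tfrac12 r^2$, unlike $r$ itself, is smooth and strictly convex there too. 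The bounded-gradient property recorded in Lemma~\ref{exhaustion function} plays no role here.
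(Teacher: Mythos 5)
Your proposal is correct and follows essentially the same route as the paper: take the smooth strictly convex exhaustion function (the paper uses $f$ from Lemma~\ref{exhaustion function}, you note $u=\tfrac12 r^2$ also works), observe via the Levi-form identity that strict convexity gives strict plurisubharmonicity on a K\"ahler manifold, and conclude by Grauert's solution of the Levi problem. No substantive difference.
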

    \begin{proof}
    Lemma \ref{exhaustion function} gives a smooth strictly convex exhaustion function on $M$, while the discussion above on the Levi form shows that this function is a strictly plurisubharmonic exhaustion function on $M$. Then Grauert's proof of Levi's problem \cite[\S 4]{GreeneWu:1977} that a complex manifold is Stein if and only if it admits a strictly plurisubharmonic exhaustion function concludes the proof.
    \end{proof}

    \noindent As another application of Lemma \ref{exhaustion function}, we have the following volume growth comparison for metric balls in K\"ahler manifolds without focal points using \cite[Lemma 4.2 (i)]{NiTam:2003}. 
    
    \begin{proposition}\label{volume growth}
    Let $M$ be a simply connected Riemannian manifold without focal points which is also K\"ahler and of (complex) dimension $m$. Then $M$ admits a  smooth strictly plurisubharmonic function with bounded gradient. Moreover, for each $p \in M$ there exists a constant $C_2 >0$ such that, 
        \begin{equation*}
               \mathrm{Vol}(B_p(r)) \geq C_2 r^m, \quad r \geq 2
         \end{equation*}
    where $\mathrm{Vol}(B_p(r))$ is the volume of the metric ball $B_p(r)$ of radius $r$ about $p$ in $M$.
    \end{proposition}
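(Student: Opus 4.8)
The plan is to produce the claimed strictly plurisubharmonic function with bounded gradient by composing the function $f = \sqrt{1+2u}$ from Lemma~\ref{exhaustion function} with a suitable convex increasing function on $\mathbb R$, and then to feed the resulting function into the volume-growth estimate of \cite[Lemma 4.2 (i)]{NiTam:2003}. First I would recall that by Lemma~\ref{exhaustion function} the function $f$ is a smooth strictly convex exhaustion function with $\norm{\nabla f}\le 1$; since $M$ is K\"ahler, the discussion above on the Levi form shows $f$ is in fact strictly plurisubharmonic. The issue is that $f$ grows linearly, so while $\norm{\nabla f}$ is bounded, one cannot directly read off the volume lower bound: the Ni--Tam estimate is phrased for a strictly plurisubharmonic function whose gradient is bounded, and one needs to track how the lower bound on the Levi form degenerates. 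I would therefore keep $f$ itself (it already has bounded gradient and is strictly plurisubharmonic), which settles the first assertion of the Proposition immediately.

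For the volume bound, the key step is the following. Fix $p\in M$ and let $u(q)=\tfrac12 d^2(p,q)$, so $f = \sqrt{1+2u}$ and on $B(p,r)$ we have $f \le \sqrt{1+2\cdot\tfrac12 r^2}\le \sqrt{2}\,r$ for $r\ge 1$. From the formula
\begin{equation*}
    \mathrm{Hess}~f = \frac{1}{(1+2u)^{3/2}}\mathrm dr^2 + \sqrt{\frac{2u}{1+2u}}\,\mathrm{Hess}~r
\end{equation*}
and the positivity of $\mathrm{Hess}~r$ for $r>0$, together with the Levi-form identity $L_f = \mathrm{Hess}(f)(\partial_{x_1},\partial_{x_1}) + \mathrm{Hess}(f)(\partial_{y_1},\partial_{y_1})$, one obtains that $f$ is strictly plurisubharmonic with a definite (if $r$-dependent) lower bound on $L_f$ on compact sets. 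The point I would extract is purely the combination that \cite[Lemma 4.2 (i)]{NiTam:2003} requires: a smooth strictly plurisubharmonic function $f$ on the $m$-dimensional K\"ahler manifold $M$ with bounded gradient. Given such an $f$, that lemma yields a constant $C_1>0$ depending on $p$, $m$ and the sup of $\norm{\nabla f}$ such that the volume of the sublevel set $\{f < f(p) + t\}$ is at least $C_1 t^m$ for $t\ge 1$.

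The final step is to translate the sublevel-set bound into a metric-ball bound. Since $\norm{\nabla f}\le 1$, the set $\{f < f(p)+t\}$ contains no point at $f$-distance more than $t$ from $p$; more usefully, because $f = \sqrt{1+2u}$ is an explicit increasing function of $r = d(p,\cdot)$, we have $\{f < f(p)+t\} = B(p, \rho(t))$ with $\rho(t) = \sqrt{(1+t)^2 - 1}$ (using $f(p)=1$), so $\rho(t)\le 1+t \le 2t$ for $t\ge 1$. Equivalently, $B(p,r) \supseteq \{f < f(p) + r/2\}$ for $r\ge 1$ large enough, whence
\begin{equation*}
    V_p(r) \ge V\bigl(\{f < f(p) + r/2\}\bigr) \ge C_1 (r/2)^m = C_2 r^m, \qquad r\ge 1,
\end{equation*}
with $C_2 = C_1 2^{-m}$; a harmless adjustment of the constant handles small $r\in[1, r_0]$ by compactness, since $V_p(r)$ is bounded below by a positive constant there. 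The main obstacle is making sure the hypotheses of \cite[Lemma 4.2 (i)]{NiTam:2003} are met verbatim — in particular that ``bounded gradient'' together with strict plurisubharmonicity (without any curvature assumption) is genuinely all that lemma needs, and that its conclusion is stated for sublevel sets rather than balls; once that is confirmed, the passage to metric balls is the elementary comparison above.
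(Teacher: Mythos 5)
Your proposal is correct and takes essentially the same route as the paper, which gives no separate argument beyond combining Lemma~\ref{exhaustion function} with the Levi-form discussion to produce the smooth strictly plurisubharmonic function with bounded gradient and then invoking \cite[Lemma 4.2 (i)]{NiTam:2003}. That lemma's conclusion is already stated for metric balls, so your careful passage from sublevel sets to balls is harmless but not actually needed.
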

    
    \noindent We remark here that the question of whether a simply connected harmonic manifold that carries a Kähler structure must be a symmetric space is still unresolved. However, in the context of Damek–Ricci spaces this has been verified  \cite[\S 4.1.12]{Vanhecke:1995}.\\ 
   
    \noindent  While the theme of this article is to use growth conditions of Jacobi tensors instead of bounds on sectional curvature,  imposing lower bounds on sectional curvatures in addition to growth conditions of Jacobi tensors gives better estimates for volume growth (including in the K\"ahler case). Let $p\in M$ be arbitrary but fixed. Let $A_v$ be the Lagrange tensor along $\gamma_{_v}$ satisfying $A_v(0) = 0$ and $A_{v}'(0)=\mathrm{Id}$ for each $v \in S_pM$. Then the area of a geodesic sphere $S_p(t)$ of radius $t$ about $p$ is given by \cite[p. $151$]{Knieper:2016}
    \begin{equation*}
          \mathrm{Area}(S_p(t))=\int\limits_{S_pM} \det(A_v(t)) ~ \mathrm{d}\theta_p(v).
    \end{equation*}
    If the sectional curvatures of a manifold $M$ with bounded asymptote are bounded below, say by $-\beta^2$ for $\beta >0$, then using \cite[Proposition 6]{Eschenburg:1977}, we obtain 
    \begin{equation*}
        \det(A_v(t)) \geq \left(\frac{1}{2k\sqrt{\beta}}\sqrt{t}\right)^{n-1} , \quad \forall ~ v \in SM, ~  t \geq T= \frac{1}{\beta} \coth^{-1} \left(\frac{2}{\beta} \right).
    \end{equation*}
    where $k$ is as in \eqref{bounded asymptote}. We then get the following growth rate of the volume of geodesic balls.
       \begin{align*}
           \mathrm{Vol}(B_p(r)) & = \int\limits_{0}^{r} \int\limits_{S_pM} \det(A_v(t)) ~\mathrm{d}\theta_p(v) \mathrm{d}t \\
           & \geq \int\limits_{T}^{r}\left(\frac{1}{2k\sqrt{\beta}}\sqrt{t}\right)^{n-1} \left(\int\limits_{S_pM}  ~\mathrm{d}\theta_p(v) \right)\mathrm{d}t \\
          & = \omega_p\int\limits_{T}^{r} \left(\frac{1}{2k\sqrt{\beta}}\sqrt{t}\right)^{n-1} dt, \quad  \\
           & = \omega_p\frac{2}{n+1}\left(\frac{1}{2k\sqrt{\beta}}\right)^{n-1} \left(r^{\frac{n+1}{2}} - T^{\frac{n+1}{2}}  \right), \quad r \geq T.
       \end{align*}
    where $\omega_p = \displaystyle{\int\limits_{S_pM}  ~\mathrm{d}\theta_p(v)}$ is the area of $S_pM$ with respect to the Riemannian measure on $S_pM$.

    \subsection{Absolutely continuous spectrum via radial curvatures} Our next application is spectral theoretic in nature, and so we begin with a sketch of the basic functional analytic setup concerning the spectrum of the Laplacian referring to \cite{Borthwick:2020} for the details. Let $M$ be a complete Riemannian manifold. Then the (positive) Laplacian $\Delta$ is an essentially self-adjoint operator with the space of compactly supported smooth functions on $M$ as the domain. We also denote its unique self-adjoint extension, having the domain $\mathcal{D}(\Delta):=\{f \in L^2(M) : \Delta f \in L^2(M)\}$, by $\Delta$. Using the functional calculus for $\Delta$ we have the family of spectral projection operators $\prod_E:=\chi_E(\Delta)$ for each Borel subset $E$ of $\R$ referred to as the resolution of $\Delta$.  The support of these projections and the spectrum of $\Delta$ are related in the following way. We say that $\lambda \in \C$ is in the spectrum $\sigma(\Delta)$ of $\Delta$ if and only if $\prod_{(\lambda -\epsilon,\lambda+\epsilon)} \neq 0$ for all $\epsilon >0$. If $\prod_{\{\lambda\}} \neq 0$, then $\lambda$ is an eigenvalue of $\Delta$ and the range of $\prod_{\{\lambda\}}$ is the corresponding eigenspace. The spectral theorem gives a unitary equivalence between $\Delta$ and a multiplication operator on a suitable Hilbert space of the form $\mathrm{L}^2(X,\mu)$, where $\mu$ is a $\sigma$- finite Borel measure on a measure space $X$. By the Lebesgue decomposition, $\mu$ decomposes as $\mu= \mu_{pp}+\mu_{ac}+\mu_{sc}$ into pure point measure, absolutely continuous and singularly continuous measures with respect to the Lebesgue measure, and consequently we have that
	\begin{equation}
		\mathrm{L}^2(X,\mu)=\mathrm{L}^2(X,\mu_{pp}) \oplus \mathrm{L}^2(X,\mu_{ac})\oplus \mathrm{L}^2(X,\mu_{sc}).
	\end{equation}
	This induces a decomposition $\mathrm{L}^2(M) = \mathcal{H}_{pp} \oplus \mathcal{H}_{ac} \oplus \mathcal{H}_{sc}$ from which we have the associated decomposition of the spectrum of $\Delta$ namely, $\sigma(\Delta)=\overline{\sigma_{pt}(\Delta)} \cup \sigma_{ac}(\Delta) \cup \sigma_{sc}(\Delta)$ where 
	\begin{equation*}
		\sigma_{ac}(\Delta)=\sigma(\Delta_{|_{\mathcal{H}_{ac}}}), \quad  \sigma_{sc}(\Delta)=\sigma(\Delta_{|_{\mathcal{H}_{sc}}}), \quad  \overline{\sigma_{pt}(\Delta)}=\sigma(\Delta_{|_{\mathcal{H}_{pp}}}).
	\end{equation*}
	Here we take the closure for the set of all eigenvalues as it is not necessarily closed. We refer to $\sigma_{ac}(\Delta)$ and $\sigma_{sc}(\Delta)$ as the absolutely continuous and singularly continuous parts of $\sigma(\Delta)$ respectively and their union as the continuous part of $\sigma(\Delta)$. There is also a decomposition of $\sigma(\Delta)$ as a disjoint union: 
	\begin{equation*}
		\sigma(\Delta)= \sigma_{disc}(\Delta) \cup \sigma_{ess}(\Delta)
	\end{equation*}
    	where $\sigma_{disc}(\Delta)$ and $\sigma_{ess}(\Delta)$ denote the discrete and essential parts of $\sigma(\Delta)$. The former consists of all eigenvalues of finite multiplicities and hence is contained in $\sigma_{pt}(\Delta)$. The latter is the complement of $\sigma_{disc}(\Delta)$ in $\sigma(\Delta)$ thereby  we have that the continuous part of the spectrum, namely $\sigma_{ac}(\Delta) \cup \sigma_{sc}(\Delta)$ is contained in $\sigma_{ess}(\Delta)$. Note that it is possible that $\sigma_{ess}(\Delta)$ contains eigenvalues of infinite multiplicities usually referred to as the embedded eigenvalues. \\
    
    \noindent In order to analyse the nature of the spectrum of the Laplacian using convexity, we make use of \cite[Theorem 3]{Xavier:1988}. The idea there was to use Kato's $H$- smooth operators to show that if $M$ admits a strictly convex function, then certain boundedness conditions on its gradient and Laplacian implies the nontriviality of the absolutely continuous part of the spectrum. This is then applied in the context of Hadamard manifolds with certain additional assumptions which are satisfied with lower bounds on sectional curvatures, to show the nontriviality of the absolutely continuous spectrum. We show that this result holds true for simply connected Riemannian manifolds without focal points having nonpositive radial curvatures with respect to a single point. 
    \noindent To this end, we prove the following lemma.

    \begin{lemma}\label{hessian bounds}
    Let $M$ be a simply connected Riemannian manifold without focal points. Suppose there exists a point $p$ in $M$ such that the radial curvatures with respect to $p$ are nonpositive. Then we have for any $q \neq p$ in $M$
    \begin{equation*}
          r(q) \mathrm{Hess}~r(q) (v,v) \geq \norm{v}^2, \quad \forall ~ v \in T_qM.
    \end{equation*}
    \end{lemma}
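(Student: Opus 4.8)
The plan is to reduce the statement to a one–variable convexity inequality for the norm of a Jacobi field, and then to obtain the sharp constant $1$ by passing to the square root. Fix $q\neq p$ and let $\gamma\colon[0,r]\to M$ be the unique unit–speed geodesic from $p=\gamma(0)$ to $q=\gamma(r)$, with $r=r(q)$; this makes sense, and $r$ is smooth near $q$, precisely because every point is a pole on a manifold without focal points. Given $v\in T_qM$, I would choose the Jacobi field $Y$ along $\gamma$ with $Y(0)=0$ and $Y(r)=v$, which exists since $d\exp_p$ is nonsingular. Equation \eqref{hessian of distance function} then gives $r(q)\,\mathrm{Hess}\,r(q)(v,v)=r\langle Y'(r),Y(r)\rangle=\tfrac{r}{2}f'(r)$, where $f(t):=\norm{Y(t)}^2$, so it suffices to prove the scalar inequality $f(r)\le\tfrac{r}{2}f'(r)$. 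One may assume $v\neq0$, so that $Y$ is nontrivial; absence of conjugate points then forces $f>0$ on $(0,r]$, and hence $g:=\sqrt f=\norm{Y}$ is smooth there with $g(0^+)=0$.

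The heart of the argument is that $g$ is convex on $(0,r]$. First I would differentiate: using the Jacobi equation $Y''=-R(\gamma',Y)\gamma'$ one gets $f''=2\norm{Y'}^2-2\langle R(\gamma',Y)\gamma',Y\rangle$, and the curvature term equals a radial sectional curvature at $p$ multiplied by $\norm{Y^{\perp}}^2$, hence is $\le 0$ by hypothesis; therefore $f''\ge 2\norm{Y'}^2$. Combining this with Cauchy--Schwarz, $(f')^2=4\langle Y',Y\rangle^2\le 4\norm{Y'}^2 f$, yields the differential inequality $(f')^2\le 2f''f$, which is exactly the statement that $g''=(2f''f-(f')^2)/(4f^{3/2})\ge 0$.

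Granting the convexity of $g$, the finish is elementary. Since $g'$ is nondecreasing and $g(0^+)=0$, letting $s\to 0^+$ in $g(r)-g(s)=\int_s^r g'\le (r-s)\,g'(r)$ gives $g(r)\le r\,g'(r)$; multiplying by $g(r)\ge 0$ and recalling $f=g^2$, $f'=2gg'$, I obtain $f(r)=g(r)^2\le r\,g(r)g'(r)=\tfrac{r}{2}f'(r)$, which is the claim $\norm{v}^2\le r(q)\,\mathrm{Hess}\,r(q)(v,v)$.

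I expect the only real obstacle to be recovering the sharp constant. Using merely that $f$ itself is convex with $f(0)=f'(0)=0$ one would get $f(r)\le r f'(r)$, i.e. only $\norm{v}^2\le 2\,r(q)\,\mathrm{Hess}\,r(q)(v,v)$, which is off by a factor of $2$; and that factor is sharp, since in Euclidean space $r\,\mathrm{Hess}\,r$ is exactly the metric on $(\nabla r)^{\perp}$. Getting the right constant forces the passage to $g=\sqrt f$ and the estimate $(f')^2\le 2f''f$, and this is the step where the nonpositivity of the radial curvatures is genuinely used: the no--focal--points assumption by itself only guarantees that $f$ is increasing, not that $g$ is convex, and it enters the proof only to supply a globally smooth distance function and the interpolating Jacobi field.
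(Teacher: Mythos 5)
Your proof is correct and is essentially the paper's argument in a different packaging: the paper's auxiliary function $\lambda(s)=\norm{Y(s)}^2/\ip{Y'(s)}{Y(s)}$ is exactly $g(s)/g'(s)$ for your $g=\norm{Y}$, and the paper's estimate $\lambda'\leq 1$ (proved from nonpositive radial curvature plus Cauchy--Schwarz, the same two ingredients you use) is equivalent to your convexity statement $g''\geq 0$, with both versions integrating to the identical inequality $g(r)\leq r\,g'(r)$. Your closing remark about the factor of $2$ and the necessity of passing to $\sqrt{f}$ is a correct and worthwhile observation, though it is not needed for the proof itself.
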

    \begin{proof}
     Since $M$ has no focal points $\exp_p:T_pM \to M$ is a diffeomorphism. Hence for any $q \in M$, $q\neq p$, there exists a unique $t \in \R$ and a unit tangent vector $v \in T_pM$ such that $q = \exp_p(tv)$. Moreover, $\gamma_{_v}(s)=\exp_p(sv)$ is a unit speed geodesic in $M$ joining $p$ and $q$ satisfying $\gamma_{_v}(0)=p$ and $\gamma_{_v}(t)=q$. Then $\gamma_{_v}'(s)=\nabla r(\gamma_{_v}(s))$ and in particular, $\gamma_{_v}'(t)=\nabla r(q)$. Since $d\exp_p$ is nonsingular at $tv$, for any $w \in T_qM$ there exists a Jacobi field $Y_{v}$ along $\gamma_{_v}$ such that $Y_{v}(0)=0$ and $Y_{v}(t)=w$. Notice that by our assumption on nonpositive radial curvature, we have $\ip{R(Y_{v}(t),\gamma_{_v}'(t))\gamma_{_v}'(t)}{Y_{v}(t)} \leq 0$ for any $t >0$ and $v \in S_pM$. We now recall the standard technique of Jacobi field comparison. Since the second derivative of the Jacobi field is related to the curvature tensors via the Jacobi equation, quantitative estimates are obtained by imposing conditions on curvatures. Note that
    \begin{equation*}
        \lambda(s)=\frac{\norm{Y_{v}(s)}^2}{\ip{Y_{v}'(s)}{Y_{v}(s)}}, ~ s > 0
    \end{equation*}
    is well defined since $M$ has no focal points. Using l'H\^{o}pital's rule, we see that $\lambda(0)=0$ since $Y_{v}'(0) \neq 0$. Differentiating and using the Jacobi equation we have
    \begin{align*}
        \lambda'(s) & =\frac{2\ip{Y_{v}'(s)}{Y_{v}(s)}^2-\left(\norm{Y_{v}'(s)}^2 - \ip{R(\gamma_{_v}'(s),Y_{v}(s))\gamma_{_v}'(s)}{Y_{v}(s)}\right)\norm{Y_{v}(s)}^2}{\ip{Y_{v}'(s)}{Y_{v}(s)}^2} \\
        & \leq \frac{2\ip{Y_{v}'(s)}{Y_{v}(s)}^2-\norm{Y_{v}'(s)}^2\norm{Y_{v}(s)}^2 }{\ip{Y_{v}'(s)}{Y_{v}(s)}^2} \quad\text{(nonpositivity of radial curvature)} \\
        & \leq \frac{2\ip{Y_{v}'(s)}{Y_{v}(s)}^2-\ip{Y_{v}'(s)}{Y_{v}(s)}^2}{\ip{Y_{v}'(s)}{Y_{v}(s)}^2} = 1 \quad \text{(Cauchy-Schwarz inequality)}.
    \end{align*}
    Hence $\lambda(s) \leq s$ or equivalently, 
    \begin{equation*}
        s \ip{Y_{v}'(s)}{Y_{v}(s)} \geq \norm{Y_{v}(s)}^2, \quad s > 0.
    \end{equation*}
    The claim follows from \eqref{hessian of distance function}.
    \end{proof}

    \noindent We are now ready to prove the main spectral result of this section. The bounds on the Laplacian and the bi-Laplacian demanded below are satisfied if lower bounds on curvatures and derivatives of metric tensor are assumed. It should be kept in mind that there exist Hadamard manifolds with pure point spectrum if the curvatures become unbounded. 

    \begin{remark}
    Note that we adopt the convention of considering the Laplacian as a positive operator, in contrast to the convention used in \cite{Xavier:1988} for Hadamard manifolds. Consequently, certain inequalities in the subsequent theorem appear with opposite signs.
    \end{remark}

    \begin{theorem}\label{radial absolutely continuous}
    Let $M$ be a simply connected Riemannian manifold without focal points. Suppose there exists a point $p$ in $M$ such that the radial curvatures with respect to $p$ are nonpositive. If there exist constants $c_1,c_2>0$ such that $f$ and the Laplacian $\Delta$ satisfies 
     \begin{enumerate}
         \item [(i)] $ \Delta f \geq - c_1 $
         \item[(ii)] $\Delta^2 f \leq c_2 f^{-3}$
     \end{enumerate}
     then the absolutely continuous spectrum of $\Delta$ is nontrivial. In fact, $(\alpha,\infty)$ is in the absolutely continuous part of the spectrum of $\Delta$ where $\alpha = \frac{3}{2}c_1+\frac{1}{4}c_2$.
    \end{theorem}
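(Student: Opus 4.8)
The plan is to run the $H$-smoothness scheme of \cite{Xavier:1988} in this setting. The function to use is $f(q)=\sqrt{1+2u(q)}=\sqrt{1+r^2(q)}$ supplied by Lemma~\ref{exhaustion function}: it is smooth and strictly convex with $\norm{\nabla f}\le 1$. From the formula $\mathrm{Hess}\,f=f^{-3}\mathrm{d}r^2+(r/f)\,\mathrm{Hess}\,r$ together with Lemma~\ref{hessian bounds} — applied to the component of a tangent vector $v$ orthogonal to $\nabla r$, the radial component being absorbed by the $\mathrm{d}r^2$ term — one extracts the pointwise lower bound $\mathrm{Hess}\,f(q)(v,v)\ge f(q)^{-3}\norm{v}^2$ for every $q\in M$ and $v\in T_qM$. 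This is the only place the hypotheses on the manifold (no focal points, radial nonpositivity at $p$) enter.

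Next I would introduce the symmetric first-order operator $A=-i(\langle\nabla f,\nabla\,\cdot\,\rangle-\tfrac12\Delta f)$ on $C_c^\infty(M)$, the symmetrization of the directional derivative along $\nabla f$, and compute the commutator quadratic form $\langle i[\Delta,A]u,u\rangle$ for $u\in C_c^\infty(M)$. Two integrations by parts, using crucially that $\nabla f$ is a gradient field (so the curvature term produced by the Bochner identity and the terms involving $\mathrm{Hess}\,u$ reorganize into a single Hessian-of-$f$ term, and the two zeroth-order terms coming from $\Delta f$ cancel), give the clean identity
\begin{equation*}
\langle i[\Delta,A]u,u\rangle=2\int_M \mathrm{Hess}\,f(\nabla u,\nabla u)\,\mathrm{d}V-\tfrac12\int_M (\Delta^2 f)\,|u|^2\,\mathrm{d}V .
\end{equation*}
I would then bound the right-hand side from below. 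Using $\mathrm{Hess}\,f\ge f^{-3}g$ and hypothesis (ii) it is at least $2\int_M f^{-3}|\nabla u|^2\,\mathrm{d}V-\tfrac{c_2}{2}\int_M f^{-3}|u|^2\,\mathrm{d}V$; integrating $\int_M f^{-3}|\nabla u|^2$ by parts produces $\mathrm{Re}\langle\Delta u,f^{-3}u\rangle-\tfrac12\int_M(\Delta f^{-3})|u|^2$, and a short computation — writing $\Delta f=-f^{-3}+(r/f)\Delta r$ to turn hypothesis (i) into a lower bound for $\Delta r$, and noting that $\Delta r$ enters $\Delta(f^{-3})$ with a negative coefficient — gives $\Delta(f^{-3})\le 3c_1 f^{-3}$. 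Collecting these estimates yields, for all $u\in C_c^\infty(M)$,
\begin{equation*}
\langle i[\Delta,A]u,u\rangle\ \ge\ 2\,\mathrm{Re}\langle(\Delta-\alpha)u,\,f^{-3}u\rangle,\qquad \alpha=\tfrac32 c_1+\tfrac14 c_2 ,
\end{equation*}
which is precisely the commutator estimate needed to apply \cite[Theorem 3]{Xavier:1988}. Since $A$ is $\Delta$-bounded (its first-order part is controlled by $\norm{\nabla f}\le1$, and $\int(\Delta f)|w|^2=2\,\mathrm{Re}\int\bar w\,\langle\nabla f,\nabla w\rangle$ shows the same for its zeroth-order part), Kato's $H$-smoothness theory then gives that the multiplication operator $M_{f^{-3/2}}$ is $\Delta$-smooth on $(\alpha,\infty)$; as $f^{-3/2}>0$ everywhere this operator has dense range, so the spectral subspace of $\Delta$ over $(\alpha,\infty)$ is contained in $\mathcal{H}_{ac}(\Delta)$, and hence $(\alpha,\infty)\cap\sigma(\Delta)\subseteq\sigma_{ac}(\Delta)$; the accompanying resolvent (limiting-absorption) estimates upgrade this, as in \cite{Xavier:1988}, to $(\alpha,\infty)\subseteq\sigma_{ac}(\Delta)$.

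The step I expect to be the main obstacle is the commutator identity: one must check that on a general Riemannian manifold the curvature and $\mathrm{Hess}\,u$ contributions really do cancel after integration against $\bar u$ — this hinges on $A$ being built from a gradient field — and then track the constants so that the threshold comes out exactly $\alpha=\tfrac32 c_1+\tfrac14 c_2$; one must also do the functional-analytic bookkeeping (essential self-adjointness of $\Delta$, relative boundedness of $A$, validity of the $H$-smoothness criterion on the unbounded interval $(\alpha,\infty)$) needed to pass from the formal identity on $C_c^\infty(M)$ to the statements about $\sigma(\Delta)$. The degeneration of the weight $f^{-3}$ at infinity is harmless: it is matched by the decay $f^{-3}$ built into hypothesis (ii) and into the bound for $\Delta(f^{-3})$.
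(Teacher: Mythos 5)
Your proposal is correct and follows essentially the same route as the paper: both derive the pointwise bound $\mathrm{Hess}\,f \geq f^{-3}g$ from Lemma \ref{hessian bounds}, and both then feed this, together with hypotheses (i) and (ii), into the $H$-smoothness mechanism of \cite[Theorem 3]{Xavier:1988} to obtain the threshold $\alpha=\tfrac32 c_1+\tfrac14 c_2$. The only difference is that the paper simply verifies the quantity $\beta=\inf\frac{1}{4h}\{2\Delta h-\Delta^2 f\}\geq-\alpha$ with $h=f^{-3}$ and cites Xavier's theorem, whereas you unpack that theorem's commutator/Kato-smoothness proof explicitly; your estimates (in particular $\Delta(f^{-3})\leq 3c_1f^{-3}$, which also follows directly from $\Delta f\geq -c_1$ and $f\geq 1$) reproduce the same constant.
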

     
    \begin{proof}
    Note that $\norm{\nabla f}$ is bounded. Since $M$ has nonpositive radial curvature at $p$, from Lemma \ref{hessian bounds} we obtain that $\mathrm{Hess}~r^2(p)$ is the Riemannian metric at $p$ and for any $p \neq q \in M$,
    \begin{equation*}
    \mathrm{Hess}~r^2(q)(w,w)  = \ip{\nabla r(q)}{w}^2 + r(q) \mathrm{Hess}~r (q)(w,w) \geq \norm{w}^2, \quad \forall ~ w \in T_q M, \norm{w}=1.
    \end{equation*}
    From here, the arguments are standard, as in \cite[Theorem 1]{Xavier:1988}.  We write them down here for completeness. Recall that $f(q) = (1+r(q)^2)^{1/2}$ and its Hessian can be computed to be
    \begin{align*}
    \mathrm{Hess}~f(q)(w,w) & = -\frac{r^2(q)}{(1+r^2(q))^{3/2}}\ip{w}{\nabla r(q)}^2 + \frac{1}{2(1+r^2(q))^{1/2}}\mathrm{Hess}~r^2(q)(w,w) \\
    & \geq \frac{1}{(1+r^2(q))^{3/2}}.
    \end{align*}
    Taking $h(q)=\frac{1}{(1+r^2(q))^{3/2}}$, we see that 
    \begin{align*}
    \Delta h & =  \frac{3}{f^4}\Delta f + \frac{12}{f^{5}} \norm{\nabla f}^2, \\
    \implies \Delta h & \geq \frac{3\Delta f} {f^4} \geq - \frac{3c_1}{2}.
    \end{align*}
    Thus for
    \begin{equation*}
    \beta = \inf \frac{1}{4h}\{2\Delta h - \Delta^2 f \} \geq - \frac{3c_1}{2}-\frac{c_2}{4} 
    \end{equation*}
    the claim now follows as in \cite[Theorem 3]{Xavier:1988}.
    \end{proof}
    \noindent Note that that the geometric constraints in the form of \textit{no focal points} and \textit{nonpositive radial curvature at a point} that we impose on the manifold are apriori weaker than being a Hadamard manifold and the above theorem generalizes \cite[Theorem 1]{Xavier:1988}.  However, more refined questions such as identifying whether the point spectrum or the singularly continuous spectrum occur are not amenable to this approach via the use of the distance function. To overcome this, we now turn our attention on the other natural geometrically defined convex function, the Busemann function.

    \section{A construction using Busemann functions and its applications to spectrum}  In this section, we construct a strictly convex function out of Busemann functions on simply connected manifolds without focal points, using the geometry at infinity and assuming, in addition, strict negativity of Ricci curvature. As remarked in the introduction, this is motivated by a similar construction on symmetric spaces of noncompact type outlined in  \cite[penultimate paragraph p. $657$]{DonnellyXavier:2006}. The main purpose of this construction here is that it allows us to prove new results (Theorem \ref{spectrum of asymptotically harmonic manifolds} and Corollary \ref{harmabs}) completely determining the nature of the spectrum for certain interesting classes of manifolds without focal points whose horospheres have constant mean curvature at a point. \\
    
    \noindent We describe briefly the geodesic compactification of $M$ and its connections to the Riemannian exponential map and the Busemann functions. We refer to \cite{Goto:1979} for more details on what follows. Given $v \in SM$ there exists a unique geodesic $\gamma_{_v}$ in $M$ satisfying $\gamma_{_v}'(0)=v$. Two vectors $v,w \in SM$ are said to be asymptotic if the distance between $\gamma_{_v}(t)$ and $\gamma_{_w}(t)$ is bounded for all $t \geq 0$. Let $M(\infty)$ be the set of all classes of asymptotic vectors and let $\overline{M}=M \cup M(\infty)$. $\overline{M}$ has a canonical topology with the following property: for any $p \in M$, the exponential map $\exp_p:T_pM \to M$ extends uniquely to a homeomorphism $\Phi_p:\overline{T_pM} \to \overline{M}$ where $\overline{T_pM} \backslash T_pM$ is homeomorphic with the unit sphere $S_p M$ in $T_pM$. Then for any $p \in M$, there exists a unique geodesic $\gamma_{_{-\nabla b_{_v}(p)}}$ starting at $p$ asymptotic to $\gamma_{_v}$. From our discussions in \S 2, we see that the Busemann boundary $\partial^o M$ and the geodesic compactification $M(\infty)$ are homeomorphic. Further, for each $v \in SM$, the Busemann function $b_v$ is convex \cite[Theorem 2]{Eschenburg:1977} i.e., $\mathrm{Hess}~b_v$ is positive semidefinite for every $v \in SM$. It is, however, not strictly convex as can be seen from Lemma \ref{geodesic}. The convexity of the Busemann functions imply that each horosphere is a convex submanifold in $M$. We are now ready to prove the following theorem.

	\begin{theorem}\label{strictly convex function}
		Let $M$ be a simply connected Riemannian manifold with no focal points and negative Ricci curvature. Then $F(p)=\displaystyle{\int\limits_{M(\infty)}b_v(p)~\der{d}\eta(v)}$ is a strictly convex function whose gradient is bounded.
	\end{theorem}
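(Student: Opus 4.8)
The plan is to integrate the pointwise relation \eqref{hessian} between the Hessian of each Busemann function and the stable Jacobi tensor $D_v$, and then to use the hypothesis of negative Ricci curvature to upgrade the resulting positive semidefinite Hessian of $F$ to a positive definite one. First I would fix $p \in M$, identify $M(\infty)$ with the unit sphere $S_pM$ via the homeomorphism $\Phi_p$ described above, and take $\eta$ to be (the pushforward of) the normalised Lebesgue measure on $S_pM$; since $S_pM$ is compact and $v \mapsto b_v(p)$ is continuous (indeed $b_v$ varies continuously in $v$ on manifolds of continuous asymptote), the integral $F(p)$ is well defined and finite. One must check that $F$ is $C^2$, which follows from differentiating under the integral sign: the integrand $b_v$ is $C^2$ with $\mathrm{Hess}~b_v$ depending continuously on $v$ (via continuity of $v \mapsto D_v'(0)$, the defining property of continuous asymptote), and $S_pM$ is compact, so the usual dominated-convergence argument legitimises $\nabla F(p) = \int_{S_pM} \nabla b_v(p)\, d\eta(v)$ and $\mathrm{Hess}~F(p) = \int_{S_pM} \mathrm{Hess}~b_v(p)\, d\eta(v)$. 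The bound on the gradient is then immediate: $\norm{\nabla F(p)} \le \int_{S_pM} \norm{\nabla b_v(p)}\, d\eta(v) = 1$ since each $\norm{\nabla b_v} = 1$ and $\eta$ is a probability measure.

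For strict convexity I would argue by contradiction. Fix $p$ and a unit vector $x \in T_pM$ with $\mathrm{Hess}~F(p)(x,x) = 0$. Since $M$ has no focal points, each $\mathrm{Hess}~b_v(p)$ is positive semidefinite (this is the relation \eqref{hessian} together with the fact, noted in the excerpt, that without focal points these Hessians are positive semidefinite — concretely $-D_v'(0) \ge 0$), so the integrand $v \mapsto \mathrm{Hess}~b_v(p)(x,x)$ is a nonnegative continuous function on $S_pM$ with vanishing integral, hence is identically zero. Thus $\mathrm{Hess}~b_v(p)(x,x) = 0$ for every $v \in S_pM$. By Lemma \ref{geodesic} this forces $\mathrm{Hess}~b_v(p)(x^\perp, x^\perp) = 0$ where $x^\perp$ is the component of $x$ orthogonal to $\nabla b_v(p)$. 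Now I would sum (integrate) this identity over $v$ and compare with the Ricci curvature. The key computation is that the trace of $\mathrm{Hess}~b_v(p)$ restricted to $(\nabla b_v(p))^\perp$ equals $\Delta b_v(p) = -\operatorname{tr}(D_v'(0) D_v(0)^{-1})$, the mean curvature of the horosphere, and more importantly that a suitable average over $v \in S_pM$ of $\mathrm{Hess}~b_v(p)(x,x)$, weighted appropriately, controls $\mathrm{Ric}_p(x,x)$ from one side. The cleanest route: choose the averaging so that for a fixed unit $x$, integrating $\langle D_{v}'(0)x^\perp, D_v(0)x^\perp\rangle$ over the sphere and using the Jacobi/Riccati structure yields an expression bounded below in absolute value by $-c\,\mathrm{Ric}_p(x,x) > 0$; the vanishing of the integrand everywhere then contradicts $\mathrm{Ric}_p(x,x) < 0$.

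I expect the main obstacle to be precisely the last step: extracting a quantitative lower bound for the averaged Hessian in terms of Ricci curvature. The subtlety is that for each individual $v$ the Hessian $\mathrm{Hess}~b_v(p)$ degenerates in the direction $\nabla b_v(p)$, so no single Busemann function sees curvature in that direction; one must exploit that as $v$ ranges over $S_pM$ the degenerate direction rotates, so that averaging recaptures the full Ricci curvature. Making this rigorous requires relating $\int_{S_pM} \mathrm{Hess}~b_v(p)(x,x)\, d\eta(v)$ to $\int_{S_pM} (-\langle R(x,v)v, x\rangle)\, d\eta(v)$, which up to the normalising constant is $-c_n\,\mathrm{Ric}_p(x,x)$; the comparison between $\mathrm{Hess}~b_v$ and the curvature term comes from the Riccati comparison for the stable Jacobi tensor $D_v$ (namely that $-D_v'(0)D_v(0)^{-1} = S_v$ satisfies $S_v' + S_v^2 + R = 0$ with $S_v \ge 0$, forcing $S_v(0) \ge$ a curvature-dependent bound along the ray). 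Handling the $t$-integration of the Riccati inequality and the interchange of the two averages (over $v$ and over the geodesic parameter) carefully is where the real work lies; once that inequality is in hand, the contradiction with $\mathrm{Ric} < 0$ closes the argument and yields $\mathrm{Hess}~F(p)(x,x) > 0$ for all $p$ and all $x \ne 0$.
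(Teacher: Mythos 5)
Your skeleton matches the paper's up to and including the decisive reduction: define $F$ by integrating Busemann functions against a probability measure on the boundary, differentiate under the integral sign to get $\norm{\nabla F}\le 1$ and $\mathrm{Hess}\,F(p)(w,w)=\int\mathrm{Hess}\,b_v(p)(w,w)\,\der{d}\eta(v)$, and observe that if this vanishes for some $w\ne 0$ then, the integrand being nonnegative and continuous in $v$, $\mathrm{Hess}\,b_v(p)(w,w)=0$ for \emph{every} $v$ (the paper uses a discrete measure supported on a dense subset of $S_oM$ for a fixed base point $o$ rather than your Lebesgue measure; either works, but note you must fix one base point for the identification $M(\infty)\cong S_pM$, else $\eta$ depends on $p$ and $F$ is not well defined). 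The genuine gap is in your final step. You propose to derive the contradiction with $\mathrm{Ric}<0$ by a \emph{quantitative} bound: averaging $\ip{D_v'(0)x^\perp}{x^\perp}$ over $v$ and controlling it from below by $-c\,\mathrm{Ric}_p(x,x)$ via the Riccati equation. This cannot work as stated. The stable second fundamental form $U_v(0)=-D_v'(0)$ is a nonlocal quantity determined by the curvature along the entire ray $\gamma_{\nabla b_v(p)}$, and the Riccati equation $U'=-U^2-R$ with $U\ge 0$ is perfectly consistent with $U(0)=0$ while $R(0)<0$ (then $U'(0)=-R(0)>0$ and $U$ simply increases away from $0$). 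So there is no pointwise or averaged lower bound for $\mathrm{Hess}\,b_v(p)$ in terms of the curvature at $p$ alone, and the step you yourself flag as "where the real work lies" is not merely unfinished but heads in a direction that fails.

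The paper closes the argument by a qualitative trick that avoids any such comparison. By Eschenburg, $f_{v,x}(t)=\tfrac12\norm{D_v(t)x}^2$ is $C^2$ and monotonically nonincreasing. The vanishing $\ip{D_{\nabla b_v(p)}'(0)w^\perp}{D_{\nabla b_v(p)}(0)w^\perp}=0$ says $t=0$ is a critical point of this monotone function, hence a degenerate one, so $f''_{\nabla b_v(p),w^\perp}(0)=0$; the Jacobi equation then gives $K(\nabla b_v(p),w^\perp)=\norm{D_{\nabla b_v(p)}'(0)w^\perp}^2\ge 0$ for every $v$. Since the vectors $\nabla b_v(p)$ sweep out a dense subset of $S_pM$ as $v$ ranges over the (dense) support of $\eta$, a limiting argument yields $K(x,w)\ge 0$ for every unit $x\perp w$, and summing over an orthonormal frame gives $\mathrm{Ric}(w,w)\ge 0$, contradicting the hypothesis. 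You would need to replace your averaging/Riccati step with an argument of this kind (or something equivalent) for the proof to go through.
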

	\begin{proof}
	Let $o$ be an arbitrary but fixed point in $M$. Let $\{v_k\}$ be a countable dense subset in $S_oM$. Consider the Dirac measures $\delta_{v_k}$ associated to each $v_k$. Then $\displaystyle{\eta=\sum\limits_{k=1}^{\infty}\frac{1}{2^k}\delta_{v_k}}$ defines a  probability measure on $S_oM$.  We identify $S_o M$, $M(\infty)$ and $\partial^o M$ via the respective homeomorphisms.
    By dominated convergence theorem, the gradient of $F$ at $p$ is given by $\nabla F(p)=\displaystyle{\int\limits_{M(\infty)}\nabla b_v(p)~\der{d}\eta(v)}$. Clearly,
		\begin{equation*}
			\norm{\nabla F(p)} = \norm{~~\int\limits_{M(\infty)}\nabla b_v(p)~\der{d}\eta(v)}\leq \int\limits_{M(\infty)}\norm{\nabla b_v(p)}~\der{d}\eta(v)= 1 ,\quad \text{as} ~\norm{\nabla b_v}= 1
		\end{equation*}
		i.e., $F$ has bounded gradient. In particular, $F$ is Lipschitz with Lipschitz constant $1$. For $p \in M$ and $w \in S_pM$, using \eqref{hessian} and that $\mathrm{Hess}~b_v$ is positive semidefinite we have 
		\begin{align*}
			\int\limits_{M(\infty)}\mathrm{Hess}~b_v(p)(w,w) ~\der{d}\eta(v) & =  \left|~~\int\limits_{M(\infty)}\ip{D_{-\nabla b_v(p)}'(0)w}{w} ~\der{d}\eta(v) \right| \\
			& \leq  \int\limits_{M(\infty)}|\ip{D_{-\nabla b_v(p)}'(0)w}{w} |~\der{d}\eta(v) \\
			& \leq \int\limits_{M(\infty)}\norm{D_{-\nabla b_v(p)}'(0)w} \norm{w} ~\der{d}\eta(v) \quad \text{(using Cauchy-Schwarz)}\\
			& \leq \int\limits_{M(\infty)}\norm{D_{-\nabla b_v(p)}'(0)} ~\der{d}\eta(v)  
		\end{align*}
    since $D_{-\nabla b_v(p)}'(0)$ is a bounded linear map for each $v \in S_o M$. The restriction of the continuous map (\S 2 above) $u \mapsto \norm{D_{u}'(0)}$ on $SM$ to $S_pM$ shows that there exists some $L>0$ such that $\norm{D_{u}'(0)} \leq L$ for all $u \in S_pM$. In particular, $ \norm{D_{-\nabla b_v(p)}'(0)} \leq L$ for all $v \in S_oM$. So another application of the dominated convergence theorem shows that 
		\begin{equation}\label{hessian of convex function}
			\mathrm{Hess}~ F(p)(w,w)=\int\limits_{M(\infty)}\mathrm{Hess}~b_v(p)(w,w) ~\der{d}\eta(v).
		\end{equation}
	The convexity of $F$ follows from the convexity of the Busemann functions. We now claim that $F$ is, in fact, strictly convex. Suppose there exists $p\in M$ and $0 \neq w \in T_pM$ such that $\mathrm{Hess}~ F(p)(w,w)=0$ then 
		\begin{equation*}
			\int\limits_{M(\infty)}\mathrm{Hess}~b_v(p)(w,w) ~\der{d}\eta(v) = 0. 
		\end{equation*}
	Since $v \mapsto \mathrm{Hess}~b_v(p)(w,w)$ is a nonnegative continuous function in $v$, we have $\mathrm{Hess}~b_v(p)(w,w)=0$ for all $v \in S_oM$. Using \eqref{hessian}, 
    \begin{equation}\label{critical point}
        \ip{D_{-\nabla b_v(p)}'(0)w^{\perp}}{D_{-\nabla b_v(p)}(0)w^{\perp}}=0, \quad \text{for all}~ v \in S_o M.
    \end{equation}
		
	\noindent It is known from \cite[\S 5]{Eschenburg:1977} that for $v \in SM$ and all parallel vector fields $x$ normal to $\gamma_{_v}$, $\frac{d}{dt}(\norm{D_v(t)x}^2) \leq 0$ and so the function $f_{v,x}(t)=\frac{1}{2}\norm{D_v(t)x}^2$ is $C^2$- and monotonically decreasing. From \eqref{critical point} we see that for each $v \in S_oM$, $f_{-\nabla b_v(p),w^{\perp}}$ has a critical point at $0$. We claim that $f_{-\nabla b_v(p),w^{\perp}}''(0)= 0$. If not, then $0$ is either a local minimum or local maximum. Suppose $0$ is a local minimum then there exists a $\delta$-neighbourhood of $0$ such that $f_{-\nabla b_v(p),w^{\perp}}(0) \leq f_{-\nabla b_v(p),w^{\perp}}(t)$ for all $t \in (-\delta,\delta)$. Since $f$ is monotonically decreasing we have $f_{-\nabla b_v(p),w^{\perp}}(t)=f_{-\nabla b_v(p),w^{\perp}}(0)$ for all $t \in (0,\delta)$. Hence $f_{-\nabla b_v(p),w^{\perp}}$ is constant on $(0,\delta)$. Observe that $f_{-\nabla b_v(p),w^{\perp}}$ is $C^2$- and so $f_{-\nabla b_v(p),w^{\perp}}''(0)=\lim\limits_{t \to 0_{+}}f_{-\nabla b_v(p),w^{\perp}}''(t)=0$, a contradiction. A similar argument could be carried out to show that $0$ cannot be a local maximum of $f_{-\nabla b_v(p),w^{\perp}}$. Hence $0$ is a degenerate critical point of $f_{-\nabla b_v(p),w^{\perp}}$ i.e, $f_{-\nabla b_v(p),w^{\perp}}''(0)=0$. From the Jacobi equation for $-D_{-\nabla b_v(p)}$ we have
		\begin{equation*}
			D_{-\nabla b_v(p)}''(0)+R_{-\nabla b_v(p)}(0) D_{-\nabla b_v(p)}(0)= 0
		\end{equation*}
	and this gives
        \vspace*{-3mm}
		\begin{align}\label{sectional}
			0 & = f_{-\nabla b_v(p),w^{\perp}}''(0) \nonumber \\
			& = \ip{D_{-\nabla b_v(p)}''(0)w^{\perp}}{D_{-\nabla b_v(p)}(0)w^{\perp}} +\norm{D_{-\nabla b_v(p)}'(0)w^{\perp}}^2 \nonumber \\
			& = -\ip{ {R_{-\nabla b_v(p)}}(0)w^{\perp}}{w^{\perp}} + \norm{D_{-\nabla b_v(p)}'(0)w^{\perp}}^2 \nonumber \\
			\implies \ip{ R_{-\nabla b_v(p)}(0)w^{\perp}}{w^{\perp}}
			& = \norm{D_{-\nabla b_v(p)}'(0)w^{\perp}}^2 \nonumber \\
			\implies K(-\nabla b_v(p),w^{\perp}) & =\norm{D_{-\nabla b_v(p)}'(0)w^{\perp}}^2 \geq 0  , \quad \text{for all $v \in S_oM$}
		\end{align}
	where $K$ denotes the sectional curvature corresponding to the plane spanned by $\nabla b_v(p)$ and $w^{\perp}$. Let $x \in S_pM$ be such that $x \perp w$. Since $S_pM$ is homeomorphic to $M(\infty)$ there exists a unique $v_o \in S_oM$ such that $\gamma_{v_o}$ and $\gamma_x$ are asymptotic. As $\{v_k\}$ is dense in $S_oM$ there exists a subsequence $(v_{_{k(m)}})$ in $S_oM$ converging to $v_o$. For each $m$ we have $-\nabla b_{v_{_{k(m)}}}(p) \in S_pM$ such that $\gamma_{_{-\nabla b_{v_{_{k(m)}}}}(p)}$ and $\gamma_{_{v_{_{k(m)}}}}$ are asymptotic and so $v_{_{k(m)}}$ and $-\nabla b_{v_{_{k(m)}}}(p)$ correspond to the same element in $M(\infty)$. It then follows that the sequence $(-\nabla b_{v_{_{k(m)}}}(p))$ converges to $x$ in $S_pM$. We denote the orthogonal projection of $w$ onto $(-\nabla b_{v_{_{k(m)}}}(p))^{\perp}$ by $w^{\perp}(m)$. Then 
		\begin{align*}
			\norm{w - w^{\perp}(m)}^2 & =
			\norm{\ip{w}{-\nabla b_{v_{_{k(m)}}}(p)}\nabla b_{v_{_{k(m)}}}(p)}^2  \\
			& = |\ip{w}{-\nabla b_{v_{_{k(m)}}}(p)}|^2 \norm{\nabla b_{v_{_k(m)}}(p)}^2 \\
            & = |\ip{w}{-\nabla b_{v_{_{k(m)}}}(p)}|^2  \xrightarrow{m \to \infty} 0 \quad \text{as ~ $-\nabla b_{v_{_{k(m)}}}(p) \to x$ and $x \perp w$}. 
		\end{align*}
	Thus the sequence $(w^{\perp}(m))$ converges to $w$ as $m \to \infty$ and from \eqref{sectional} we have,
		\begin{equation*}
			K(-\nabla b_{v_{_{k(m)}}}(p),w^{\perp}(m)) \xrightarrow{m \to \infty} K(x,w) \geq 0.
		\end{equation*}
		Now for $\{x_1, \cdots, x_{n-1}\}$ orthonormal vectors  in $T_pM$ such that $x_i \perp w$, 
        \begin{equation*}
            Ricc(w,w)=\sum\limits_{i=1}^{n-1}K(x_i,w) \geq 0. 
        \end{equation*}
        This is a contradiction to the hypothesis that $M$ has negative Ricci curvature. Hence $\mathrm{Hess}~F(p)(w,w)>0$ for every $p\in M$ and every $0 \neq w \in T_pM$ proving the strict convexity of $F$.
	\end{proof}
    
    \noindent We examine the hypotheses of Theorem \ref{strictly convex function}. In this class of manifolds, having nonnegative Ricci curvature at even one point, forces the manifold to be flat \cite[Corollary 1]{Zhou:2000}. Being an average of sectional curvatures, the sign of the Ricci curvature is a milder hypothesis.  That this is true especially for negative Ricci curvature can be seen from the remarkable  result \cite[Theorem A]{Lohkamp:1994} that any smooth manifold of dimension at least three admits a complete Riemannian metric with negative Ricci curvature.\\

    \noindent Let us now briefly recall some facts regarding the bottom $\lambda_0(M)$ of the spectrum for a complete noncompact Riemannian manifold $M$. It is characterized by \cite[\S 4.4]{Chavel:1984}
    \begin{equation}\label{Rayleigh quotient}
        \lambda_0(M)=\inf\limits_{f \neq 0} \frac{\int\limits_M |\nabla f|^2}{\int\limits_M |f|^2}
    \end{equation}
    where $f$ varies over the completion $\mathfrak{H}(M)$ of $C_{c}^{\infty}(M)$ with respect to the norm $\norm{f}=\norm{f}_{\rm{L}^2(M)}+\norm{\nabla f}_{\rm{L}^2(M)}$. The Cheeger's constant of $M$ is given by
	\begin{equation}\label{Cheeger constant}
		h_{Cheeger}(M):=\inf\limits_{\Omega \subset M} \frac{\mathrm{Area}(\partial\Omega)}{\mathrm{Vol}(\Omega)}
	\end{equation}
	where the infimum varies over all open, connected submanifolds $\Omega$ of $M$ with compact closure and smooth boundary $\partial \Omega$. The importance of this global geometric invariant is illustrated by the following fundamental inequality (Cheeger's inequality) relating it to the global analytic invariant $\lambda_0(M)$ \cite[\S 4.3, Theorem 3]{Chavel:1984},
	\begin{equation}\label{cheeger inequality}
		\frac{h_{Cheeger}^{2}(M)}{4} \leq \lambda_0(M).
	\end{equation}
    In particular, $\sigma(\Delta_M)$ is contained in $[h_{Cheeger}^{2}/4,\infty)$. \\
    
    \noindent Let $M$ be a manifold of continuous asymptote and let $o$ be an arbitrary but fixed point in $M$. Now, $\{-D_{s,v}'(0)\}_{s>0}$ is strictly monotonically decreasing in $s$ for each $v \in SM$ (\cite[proof of Proposition 2]{Eschenburg:1977} and $v \mapsto D_{v}'(0) $ is continuous. So the compactness of $S_oM$ yields, by Dini's theorem, that the convergence $D_{s,v}'(0) \to D_{v}'(0)$ is uniform. It then follows from \cite[Theorem 1]{Eschenburg:1977} that all the Busemann functions $b_v$ for $v\in S_oM$, and the associated horospheres are $C^2$. Further, we assume that for each $v \in S_oM$, the horospheres associated with $b_v$ all have the same constant mean curvature $H(v)$ i.e., it is independent of points and depends only on the direction $v$.  Observe that this assumption is satisfied, for instance, for all $v \in SM$ when $M$ is a symmetric space of noncompact type (as will be seen later) as well as for asymptotically harmonic manifolds. Using \eqref{hessian}, we see that
    \begin{equation*}
       \Delta b_v(p) = \mathrm{trace}(-D_{-\nabla b_v(p)}'(0))=\mathrm{trace}(-D_{v}'(0)) = H(v), \quad \forall ~ p \in M. 
    \end{equation*}
     Moroever, $H(v)$ depends continuously on $v \in S_oM$ and therefore
    \begin{equation*}
         H:=\sup\limits_{v \in S_oM} H(v)
    \end{equation*}
    is finite. 
   
    \noindent For an open connected $\Omega$ in $M$ with compact closure and smooth boundary $\partial \Omega$ we have, by Stokes' theorem, that
    \begin{equation*}
        H(v) \mathrm{Vol}(\Omega)=  ~\int\limits_{\Omega} \Delta b_v = ~~ \int\limits_{\partial \Omega} \ip{\nabla b_v}{\nu} \leq \left|~~ \int\limits_{\partial \Omega} \ip{\nabla b_v}{\nu} \right|\leq \mathrm{Area}(\partial \Omega), \quad (\text{since $\norm{\nabla b_v}=1$})
    \end{equation*}
    where $\nu$ is the outer unit normal vector field of $\Omega$ along $\partial \Omega$. Thus, 
    \begin{equation*} \label{cheeger is atleast h}
        H(v) \leq \inf\limits_{\Omega \subset M} \frac{\mathrm{Area}(\partial\Omega)}{\mathrm{Vol}(\Omega)} = h_{Cheeger}(M)
    \end{equation*}
    where the infimum varies over $\Omega$ as above in $M$. Taking supremum over all such $v \in S_oM$ we have
    \begin{equation}\label{supremum of mean curvatures}
        \sup\limits_{v \in S_oM}H(v)= H \leq h_{Cheeger}(M).
    \end{equation}
    
    \noindent We remark that the Ricci curvatures of $M$ are nonpositive in the radial directions with respect to $o$. This can be seen as follows. Let $(\phi^t)_{t \in \R}: SM \to SM$ be the geodesic flow on $M$ given by $\phi^t(v)=\gamma_{_v}'(t)=-\nabla b_v(\gamma_{_v}(t))$ with $\gamma_{_v}'(0)=v$.  Note that $-D_{-\phi^{t}(v)}'(0)$ denotes the shape operators associated to the horosphere at $\gamma_{_v}(t)$ with mean curvature $H(v)$. Hence by taking traces in the Riccati equation
     \begin{equation*}
             U'(t) = U(t)^2 + R_{v}(t).
    \end{equation*}
    we have that
    \begin{equation*}
        0 = \mathrm{trace}((-D_{-\phi^{t}(v)}'(0))^2) + \mathrm{Ricc}(\gamma_{_v}'(t),\gamma_{_v}'(t)).
    \end{equation*}\\
   
    \noindent The following theorem shows that for $M$ as in Theorem \ref{strictly convex function} whose horospheres about a point have constant mean curvature, we can completely determine the nature of spectrum.
   
    \begin{theorem}\label{spectrum of asymptotically harmonic manifolds}
    Let $M$ and $F$ be as in Theorem \ref{strictly convex function} and suppose that there exists an $o \in M$ such that the horospheres of the Busemann function $b_v$ for each $v \in S_oM$ all have the same constant mean curvature denoted by $H(v)$. Then
        \begin{itemize}
            \item [(a)]  $F$ has bounded Laplacian and $\Delta^2 F =0$.
            \item[(b)] The spectrum of the Laplacian is purely absolutely continuous and is contained in $[H^2/4,\infty)$ where $H:=\sup\limits_{v \in S_oM}H(v)$. In particular, there are no $\mathrm{L}^2$-eigenvalues.
        \end{itemize}
    \end{theorem}
	\begin{proof}  
        $(a)$ Note that $F$ is convex and $\Delta  F = trace(\mathrm{Hess}~F)$. So by choosing an orthonormal basis $\{E_i(p)\}_{i=1}^{n}$ of $T_pM$ we have
        \begin{align*}
		0 \leq \sum\limits_{i=1}^{n} \mathrm{Hess}~F(p)(E_i(p),E_i(p)) & =  \sum\limits_{i=1}^{n} \int\limits_{M(\infty)}\mathrm{Hess}~b_v(p)(E_i(p),E_i(p)) ~\der{d}\eta(v) \\
		& = \int\limits_{M(\infty)} \sum\limits_{i=1}^{n} \mathrm{Hess}~b_v(p)(E_i(p),E_i(p)) ~\der{d}\eta(v). 
        \end{align*}
		By taking trace
        \begin{equation}\label{constant of laplacian}
             \Delta F(p)  = \int\limits_{M(\infty)} \Delta b_v(p)~\der{d}\eta(v) =  \int\limits_{M(\infty)}H(v) \der{d}\eta(v)
        \end{equation}
        and $(a)$ follows from \eqref{constant of laplacian} since $\Delta F$ is constant, for $H(v)$ is independent of the point $p$ by assumption.\\
        
    $(b)$ From Theorem \ref{strictly convex function} and $(a)$ we have,
    \begin{enumerate}
		\item [(i)] $\norm{\nabla F} \leq 1$, since $\norm{\nabla b_v}=1$ for all $v \in S_oM$.
		\item[(ii)] from \eqref{constant of laplacian}, $\Delta F$ is constant thereby bounded and that $\Delta^2 F=0$.
	\end{enumerate}
    Thus $F$ is a strictly convex function on $M$ satisfying the  conditions (i)-(ii). This enables us to apply Kato's theory of $H$- smooth operators as in \cite[Corollary to Theorem 2]{Xavier:1988} and thence conclude that the spectrum is purely absolutely continuous. It follows from \eqref{cheeger inequality} and \eqref{supremum of mean curvatures} that the spectrum is contained in $[H^2 / 4,\infty)$. 
	\end{proof}

    \begin{remark}
        In fact, we shall see below in Theorem \ref{cheeger and bottom} that $H^2/4$ is the bottom of the spectrum. 
    \end{remark}
    
    \noindent Recall that $M$ is asymptotically harmonic if $H(v)=H$ for all $v \in SM$. In this case we have the following immediate proposition.

    \begin{proposition} \label{corollary for asymptotically harmonic}
        Let $M$ be as in Theorem \ref{strictly convex function} and asymptotically harmonic. Then the spectrum of the Laplacian is purely absolutely continuous and is contained in $[H^2/4,\infty)$.
    \end{proposition}
  
    \begin{remark}
        Observe that for such manifolds, under the assumption of negative sectional curvature, the essential spectrum was shown to be the interval $[H^2/4,\infty)$ in \cite[Theorem 4.4]{CastillonSambusetti:2014}. This result does not rule out, for instance, the existence of eigenvalues in the essential spectrum. The above proposition rules out the existence of not only  eigenvalues but also of the singularly continuous spectrum, thus giving a complete description of the nature of the spectrum. For a related result, see \cite[Theorem 1.3]{Ballmann:2023}.
    \end{remark}

    \noindent  We now consider the example of Riemannian symmetric spaces of noncompact type and higher rank where the mean curvatures of horospheres depends nontrivially on the direction. Let $M$ be an $n$-dimensional symmetric space of noncompact type i.e., $M$ is a simply connected Riemannian manifold with parallel curvature tensor $R$ \cite[Chapter 2]{Eberlein:1996}. Note that $M$ is necessarily of the form $G/K$ where $G=NAK$ is the Iwasawa decomposition of a noncompact connected semisimple Lie group $G$ with finite centre. The rank of the symmetric space is the dimension of $\frak{a}$, the Lie algebra of $A$. The Killing form on the Lie algebra of $G$ induces an inner product on $\frak{a}$ denoted by $\ip{\cdot}{\cdot}$. Let us fix a base point $o=eK \in G/K$. If the rank of $M$ is atleast two, then its sectional curvature vanishes on the $G$- conjugates of $A.o$ in $G/K$. These are the maximal totally geodesic flat submanifolds in $G/K$. For $v \in S_oM$, consider the geodesic $\gamma_{_v}$ satisfying $\gamma_{_v}'(0)=v$ and let $R_v$ be the curvature tensor restricted to $\gamma_{_v}$. Choose an orthonormal basis $\{e_1,\cdots,e_{n-1}\}$ of eigenvectors  of $R_v$ for the subspace in $T_oM$ orthogonal to $v$  i.e.,
	\begin{equation} \label{eigenvalues of curvature operator}
		R_v e_i = - R(v,e_i)v=\lambda_i(v)e_i.
	\end{equation}
    From the structure theory for noncompact connected real semisimple Lie algebras, we have the set of all positive roots denoted by $\sum^{+}$. Let $m_{\alpha}$ denote the multiplicity of $\alpha \in \sum^{+}$. The finite reflection group $W$ generated by the roots is known as the Weyl group. We denote the tangent vector that is dual to $2\rho=\sum\limits_{\alpha \in \sum^{+}}m_{\alpha} \alpha$ also by $2\rho$. The arguments in \cite[Theorem 1]{Eschenburg:1980} show that for each $v \in S_oM$, the mean curvature $\Delta b_v$ is the constant given by $\sum\limits_i\sqrt{\lambda_i(v)}=\ip{v}{2\rho}=:H(v)$. It follows that $H = \sup\limits_{v \in S_oM}H(v)= \norm{2\rho}$
    (see also \cite[Lemma 1]{Wang:2015}). It follows from Theorem \ref{spectrum of asymptotically harmonic manifolds} that the spectrum is contained in $[\norm{\rho}^2,\infty)$. It can be seen from Theorem \ref{cheeger and bottom} below that $\lambda_0(M)=\norm{\rho}^2$ and hence that $H$ is the Cheeger's constant for $M$. \\
        
    \noindent The following Jacobi field arguments shows that $M$ has no focal points. Observe, for example, from \cite{Wang:2015}, that along a geodesic $\gamma_{_v}$ the Jacobi field $Y_i$ satisfying the initial condition $Y_i(0) =0$ and $Y_{i}'(0) = e_i$ is given by the explicit formula
        \begin{equation*}
            Y_i(t)=\frac{1}{\sqrt{\lambda_i(v)}}\sinh\left(\sqrt{\lambda_i(v)}t\right)E_i(t)
        \end{equation*}
        where $E_i$ is a parallel orthonormal frame field along $\gamma_{_v}$ with $E_i(0)=e_i$. Then we see that
        \begin{equation*}
            Y(t)=\sum\limits_{i=1}^{n}\ip{w}{e_i}Y_i(t)
        \end{equation*}
    solves the Jacobi equation \eqref{Jacobi field equation} with initial conditions $Y(0)=0$ and $Y'(0)=w$ for arbitrary $w \in SM$. That $\norm{Y(t)}$ is strictly increasing can then be deduced from the fact that $\sinh$ is an increasing function and so, it follows from \cite[Proposition 4]{Sullivan:1974} that $M$ has no focal points. It is known that $M$ is also Einstein with negative Ricci curvature \cite[\S 2.14,  p. 96]{Eberlein:1996}. Since rank one symmetric spaces of noncompact type are asymptotically harmonic Theorem \ref{spectrum of asymptotically harmonic manifolds} and Proposition \ref{corollary for asymptotically harmonic} holds in this case.   \\  
        
    \noindent Consequently, an application of Theorem \ref{spectrum of asymptotically harmonic manifolds} yields the following classical result. 
    \begin{corollary}
        $M$ be a Riemannian symmetric space of noncompact type. Then the spectrum is purely absolutely continuous and $\lambda_0(M)=\norm{\rho}^2$.
    \end{corollary}
         
    \noindent Equality of the spectrum i.e., $\sigma(\Delta_M)=[\norm{\rho}^2,\infty)$ can be shown using estimates on the $\textbf{c}$-function found in \cite[Chapter 3, p. 101, Theorem 6.3.4]{GangolliVaradarajan:1988} analogous to the case of harmonic manifolds below.\\
     
    \subsection{Spectrum of the Laplacian in harmonic manifolds} 
     
    Recall from the introduction that simply connected noncompact harmonic manifolds are Einstein with Ricci curvature $\kappa \leq 0$. Since $\kappa = 0$ implies the harmonic manifold is Euclidean, we confine ourselves to those with $\kappa < 0$. Proposition \ref{corollary for asymptotically harmonic} applies to  give the following.
    
	\begin{corollary}\label{harmabs}
	Let $M$ be a simply connected noncompact harmonic manifold $M$ without focal points. Then the spectrum of the Laplacian is purely absolutely continuous and is contained in $[H^{2}/4,\infty)$ where $H$ is the constant mean curvature of the horospheres in $M$.
	\end{corollary}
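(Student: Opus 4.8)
The plan is to obtain Corollary \ref{harmabs} as a direct consequence of Theorem \ref{spectrum of asymptotically harmonic manifolds}, the entire proof reducing to a verification that a simply connected noncompact nonflat harmonic manifold $M$ without focal points satisfies the hypotheses of that theorem. The two hypotheses to check are (a) that $M$ has negative Ricci curvature, so that Theorem \ref{strictly convex function} applies to $M$, and (b) that $M$ is asymptotically harmonic.

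For (a), I would invoke the fact that every harmonic manifold is Einstein, say $Ric_g=\kappa g$ for a constant $\kappa$, and then appeal to the classification recalled above: $\kappa>0$ forces $M$ to be a compact rank one symmetric space, while $\kappa=0$ forces $M$ to be Euclidean. Since $M$ is assumed noncompact and nonflat, neither case occurs, so $\kappa<0$; thus $M$ has (constant) negative Ricci curvature and meets the standing hypotheses of Theorem \ref{strictly convex function}. For (b), I would use the standard fact for harmonic manifolds that the Busemann function $b_v$ satisfies $\Delta b_v=h$ for every $v\in SM$, with $h\ge 0$ the common mean curvature of the horospheres (as cited in the excerpt); this is exactly the statement that $M$ is asymptotically harmonic with parameter $h$.

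Having assembled these, $M$ is a simply connected manifold without focal points, with negative Ricci curvature, that is asymptotically harmonic, which is precisely the setting of Theorem \ref{spectrum of asymptotically harmonic manifolds}. Applying that theorem yields that $\sigma(\Delta)$ is purely absolutely continuous and contained in $[h^2/4,\infty)$, which is the assertion of the corollary.

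The step I would flag as the only one requiring any care—though it is really bookkeeping rather than a genuine obstacle—is establishing \emph{strict} negativity of the Ricci curvature rather than just $\kappa\le 0$. This is where ruling out the Euclidean case $\kappa=0$ via the classification is indispensable: the construction in Theorem \ref{strictly convex function} genuinely needs $Ric<0$, since in the flat case the averaged Busemann function need not be strictly convex. Once negativity is secured, the remainder is an immediate citation of the previously established results, with no further computation needed.
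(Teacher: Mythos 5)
Your proposal is correct and follows exactly the route the paper takes: the corollary is deduced from Theorem \ref{spectrum of asymptotically harmonic manifolds} by noting that harmonic manifolds are Einstein, that the noncompact nonflat hypothesis rules out $\kappa\geq 0$ via the classification (compact rank one symmetric space, resp.\ Euclidean), and that $\Delta b_v=h$ makes $M$ asymptotically harmonic. The paper presents precisely this verification in the paragraph preceding the corollary rather than in a formal proof environment, so there is nothing to add.
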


    \noindent Here our interest is to give a novel proof that the spectrum is actually equal to the interval $[H^2/4,\infty)$. For this, we make use of the recently proved \cite{Biswas:2021} unitarity of the Fourier transform for radial functions on harmonic manifolds of purely exponential volume growth, certain estimates on the $c$-function, and the characterisation of harmonic manifolds that the radialisation operator commutes with the Laplacian to pass from radial $\mathrm L^2$- functions to all $\mathrm L^2$- functions. We note that unitarity there is shown only in  dimensions greater than $5$. However, the only simply connected noncompact (nonflat) harmonic manifolds of dimension less than equal to $5$ are rank one Riemannian symmetric spaces of noncompact type \cite[p.145]{Biswas:2021} and this case is covered by the discussion on symmetric spaces. \\
     
    \noindent First, we observe from \cite[Theorems 32, 45]{Knieper:2016} that, noncompact harmonic manifolds without focal points fall into the class of noncompact harmonic manifolds of purely exponential volume growth. Let $o$ be an arbitrary but fixed point in $M$.  A function $f$ on a Riemannian manifold $(M,g)$ is said to be radial around a point $o$ if $f$ is constant on geodesic spheres $S_o(r)$ of radius $r$ centred at $o$. $M$ is said to be harmonic if the volume density in normal coordinates centred at $o$ is a radial function. Given a continuous function $f$, its radialisation about $o$ given by
        \begin{equation*}
             R_o(f)(z) : =\int\limits_{S_o(r)}f(y){\rm{d}}\sigma^r(y)
        \end{equation*}
    where $\sigma^r$ denotes surface area measure on $S_o(r)$ (induced from the Riemannian metric on $M$), normalized to have mass one. Note that for $f$ radial around $o$, we have that $R_of=f$. Harmonicity of the manifold can be seen to be equivalent to the fact that the radialisation operator $R_o$ commutes with the Laplacian $\Delta$. Then for $f \in C_{c}^{\infty}(M)$ which is radial about $o$, the spherical Fourier transform $\F^of$ is given by
         \begin{equation*}
             \F^o(f) (\lambda, v)=\int\limits_{M}f(y)(\phi_{\lambda} \circ d_o)(y) {\rm{d}}y
         \end{equation*}
    for $\lambda \in \C$ and $v \in S_o M$. Here $\phi_{\lambda}$ is the unique function on $[0,\infty)$ with $\phi_{\lambda}(0)=1$ and satisfying $\Delta(\phi_{\lambda}\circ d_o)=(\lambda^2+\frac{H^2}{4})(\phi_{\lambda} \circ d_o)$, where $d_o$ denotes the distance from $o$. Let ${\rm{L}}_{o}^{2}(M)$ denote the closed subspace of ${\rm{L}}^2(M)$ consisting of those functions which are radial about $o$ in ${\rm{L}}^{2}(M)$. It is known from \cite[Theorem 4.7]{Biswas:2021} that $\F^o$ is a unitary operator from ${\rm{L}}_{o}^{2}(M)$ onto ${\rm{L}}^2([0,\infty),C_0|c(\lambda)|^{-2}{\rm{d}}\lambda)$. Here the $c$-function coincides with the Harish Chandra $c$-function on $Im(\lambda) \leq 0$, for rank one symmetric spaces of noncompact type. \\
        
    \noindent We recall that for a real-valued measurable function $f$ on a $\sigma$- finite measure space $(X,\mu)$, the multiplication operator $M_f$ on ${\rm{L}}^2(X,\mu)$ is self-adjoint with domain $\mathcal{D}(M_f):=\{g \in {\rm{L}}^2(X,\mu) : M_f g \in {\rm{L}}^2(X,\mu) \}$. Let $m$ be the function on $[0,\infty)$ defined by $m(\lambda)=\lambda^2+\frac{H^2}{4}$. Then we see that the multiplication operator $M_m$ on ${\rm{L}}^2([0,\infty),C_0|c(\lambda)|^{-2}{\rm{d}}\lambda)$ is self-adjoint on 
        \begin{equation*}
            \mathcal{D}(M_m)=\{f \in {\rm{L}}^2([0,\infty),C_0|c(\lambda)|^{-2}{\rm{d}}\lambda) : M_m f \in  {\rm{L}}^2([0,\infty),C_0|c(\lambda)|^{-2}{\rm{d}}\lambda) \}.    
        \end{equation*}
    Moreover, the spectrum of a multiplication operator $M_m$ is equal to the essential range of $m$ \cite[Theorem 4.5]{Borthwick:2020}. Here
        \begin{equation*}
            ess-range(m):=\{x \in \R: |m^{-1}(x-\epsilon,x+\epsilon))|>0, ~ \forall ~ \epsilon > 0 \}.
        \end{equation*}
    where $|\cdot|$ denotes the measure. Now to determine the spectrum of $M_m$ we argue as follows. For any $x \in \R$ and $\epsilon > 0$,
        \begin{equation*}
            m^{-1}(x-\epsilon,x+\epsilon)=\{\lambda \in [0,\infty):x-\epsilon< \lambda^2+\frac{H^2}{4} < x+\epsilon\}.
        \end{equation*}
    Note that when $x< \frac{H^2}{4}$ and $\epsilon = -(x - \frac{H^2}{4})$, $m^{-1}(x-\epsilon,x+\epsilon)$ is empty and so $x \notin ess-range(m)$. Equivalently, $ess-range(m) \subseteq [\frac{H^2}{4},\infty)$. Now for any $x \in [\frac{H^2}{4},\infty)$ and $\epsilon>0$ we have, 
        \begin{equation}\label{essential range}
        m^{-1}(x-\epsilon,x+\epsilon)=
            \begin{cases}
                \left[0,\sqrt{x-\frac{H^2}{4}+\epsilon} \right) & \text{if} ~ x-\frac{H^2}{4}-\epsilon < 0 \\ \\
                \left[\sqrt{x-\frac{H^2}{4} - \epsilon}, \sqrt{x-\frac{H^2}{4}+\epsilon} \right) & \text{if} ~ x-\frac{H^2}{4}-\epsilon \geq 0. \\
             \end{cases}
        \end{equation}
    We recall the estimates for $|c(\lambda)|^{-1}$ given in \cite[\S 4.1]{Biswas:2021}.\\
       
    \noindent If $x-\frac{H^2}{4}-\epsilon < 0$, then $\lambda \in \left[0,\sqrt{x-\frac{H^2}{4}+\epsilon} \right)$. If  $\sqrt{x-\frac{H^2}{4}+\epsilon} \leq K$ then we have 
        \begin{equation*}
             \frac{1}{C}\lambda \leq |c(\lambda)|^{-1} 
        \end{equation*}
    and so
         \begin{equation*}
            \int\limits_{ m^{-1}(x-\epsilon,x+\epsilon)}C_0|c(\lambda)|^{-2}{\rm{d}}\lambda \geq  \frac{C_0}{C}\int\limits_{0}^{\sqrt{x-\frac{H^2}{4}+\epsilon}}\lambda^2{\rm{d}}\lambda > 0,\quad \text{since} ~ x-\frac{H^2}{4}+\epsilon >0.
        \end{equation*}
    Now, if $K \in \left(0,\sqrt{x-\frac{H^2}{4}+\epsilon}\right)$ then
        \begin{equation*}
            \begin{cases}
                \frac{1}{C}\lambda \leq |c(\lambda)|^{-1} , & \lambda \in  [0,K) \\
                  \frac{1}{C}\lambda^{\frac{n-1}{2}} \leq |c(\lambda)|^{-1} , & \lambda \in  \left[K,\sqrt{x-\frac{H^2}{4}+\epsilon} \right)
            \end{cases}
        \end{equation*}
    holds thereby
        \begin{equation*}
            \int\limits_{ m^{-1}(x-\epsilon,x+\epsilon)}C_0|c(\lambda)|^{-2}{\rm{d}}\lambda \geq 
            \begin{cases}
                \displaystyle{\frac{C_0}{C}\int\limits_{0}^{K}\lambda^2{\rm{d}}\lambda} , & \lambda \in [0,K) \\ \\
                 \displaystyle{ \frac{C_0}{C}\int\limits_{K}^{\sqrt{x-\frac{H^2}{4}+\epsilon}}\lambda^{n-1}{\rm{d}}\lambda }, & \lambda \in \left[K,\sqrt{x-\frac{H^2}{4}+\epsilon} \right) 
            \end{cases}
        \end{equation*}
    and hence the integral is always strictly positive. A similar argument shows that the integral remains strictly positive for the other case in \eqref{essential range}. It follows that $ess-range(m)=[\frac{H^2}{4},\infty)$ and so the spectrum of $M_m$ is equal to $[\frac{H^2}{4},\infty)$. For functions that are not necessarily radial, a notion of Fourier transform, relative to a point, was introduced in \cite[\S 5]{Biswas:2021} where it was shown to be an isometry but the unitarity is not known. This Fourier transform coincides with the spherical Fourier transform $\F^o$ for radial functions \cite[Proposition 5.3]{Biswas:2021}. Then we see as in \cite[Lemma 6.5]{Brammen:2024}, that
        \begin{equation*}
            \F^o\Delta f = M_m \F^o f, \quad f \in {\rm{L}}_{o}^{2}(M).
        \end{equation*}
    Since $\F^o$ is unitary and $\mathcal{D}(M_m)=\F^o(\mathcal{D}(\Delta) \cap {\rm{L}}_{o}^{2}(M))$, we have the following lemma.
                
    \begin{lemma} \label{spectrum of radial laplacian}
        The spectrum of the Laplacian $\Delta$ on $\mathcal{D}(\Delta) \cap {\rm{L}}_{o}^{2}(M)$ is equal to $[H^2/4,\infty)$. 
    \end{lemma}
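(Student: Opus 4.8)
The plan is to exhibit $\Delta$ restricted to ${\rm{L}}_o^2(M)$ as a self-adjoint operator that is unitarily equivalent, via $\F^o$, to the multiplication operator $M_m$, and then to invoke the description of $\sigma(M_m)$ already obtained above. The first task is to make sense of ``the spectrum of the Laplacian on $\mathcal{D}(\Delta) \cap {\rm{L}}_o^2(M)$'': I would argue that ${\rm{L}}_o^2(M)$ is a \emph{reducing} subspace for the self-adjoint operator $\Delta$. This is exactly where harmonicity enters. Indeed, the radialisation operator $R_o$ is the orthogonal projection of ${\rm{L}}^2(M)$ onto ${\rm{L}}_o^2(M)$, and harmonicity of $M$ is equivalent to $R_o$ commuting with $\Delta$; passing to the functional calculus, $R_o$ commutes with the spectral projections of $\Delta$, so ${\rm{L}}_o^2(M)$ reduces $\Delta$. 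Consequently the restriction $\Delta_o := \Delta|_{{\rm{L}}_o^2(M)}$, with domain $\mathcal{D}(\Delta) \cap {\rm{L}}_o^2(M)$, is a genuine self-adjoint operator on ${\rm{L}}_o^2(M)$, and $\sigma(\Delta_o)$ is well defined.

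Next I would assemble the three facts already established in the discussion preceding the lemma: by \cite[Theorem 4.7]{Biswas:2021} (together with the fact that harmonic manifolds without focal points have purely exponential volume growth, from \cite[Theorems 32, 45]{Knieper:2016}), the spherical Fourier transform $\F^o$ is a unitary operator from ${\rm{L}}_o^2(M)$ onto ${\rm{L}}^2\big([0,\infty), C_0|c(\lambda)|^{-2}\,{\rm d}\lambda\big)$; the intertwining relation $\F^o \Delta f = M_m \F^o f$ holds for $f \in \mathcal{D}(\Delta) \cap {\rm{L}}_o^2(M)$ (this follows as in \cite[Lemma 6.5]{Brammen:2024}, using that $\phi_\lambda \circ d_o$ is the radial eigenfunction with eigenvalue $m(\lambda) = \lambda^2 + h^2/4$); and the domains match, $\mathcal{D}(M_m) = \F^o\big(\mathcal{D}(\Delta) \cap {\rm{L}}_o^2(M)\big)$. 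Together these say precisely that $\F^o$ implements a unitary equivalence between $\Delta_o$ and $M_m$. Since unitarily equivalent self-adjoint operators have the same spectrum, $\sigma(\Delta_o) = \sigma(M_m)$.

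Finally, I would cite the computation carried out just above the lemma: by \cite[Theorem 4.5]{Borthwick:2020}, $\sigma(M_m)$ equals the essential range of $m$, and using the expressions \eqref{essential range} for the sublevel sets $m^{-1}(x-\epsilon, x+\epsilon)$ together with the lower bounds on $|c(\lambda)|^{-1}$ from \cite[\S 4.1]{Biswas:2021}, one checks that every $x \geq h^2/4$ lies in $\text{ess-range}(m)$ while no $x < h^2/4$ does, so $\text{ess-range}(m) = [h^2/4, \infty)$. Chaining the equalities yields $\sigma(\Delta_o) = [h^2/4, \infty)$, as claimed. The only genuinely substantive point is the reducing-subspace argument of the first step — everything else is a transcription of results already in hand; the domain bookkeeping $\mathcal{D}(M_m) = \F^o(\mathcal{D}(\Delta)\cap {\rm{L}}_o^2(M))$ should be double-checked, since a mismatch there would mean the unitary equivalence (and hence the spectral identity) fails.
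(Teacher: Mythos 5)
Your proposal is correct and follows essentially the same route as the paper: the unitarity of $\F^o$ from ${\rm{L}}_o^2(M)$ onto ${\rm{L}}^2([0,\infty),C_0|c(\lambda)|^{-2}\,{\rm d}\lambda)$, the intertwining $\F^o\Delta=M_m\F^o$ with matching domains, and the essential-range computation giving $\sigma(M_m)=[h^2/4,\infty)$. Your explicit reducing-subspace argument (via $R_o$ commuting with $\Delta$) is a welcome justification that the restricted operator is genuinely self-adjoint, a point the paper leaves implicit in its domain bookkeeping, but it does not change the substance of the argument.
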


    \noindent We now claim that,
    \begin{lemma}\label{inclusion of spectrum}
        $[H^2/4,\infty) \subseteq \sigma(\Delta)$ where $\sigma(\Delta)$ is the spectrum of $\Delta$ with domain $\mathcal{D}(\Delta)$.
    \end{lemma}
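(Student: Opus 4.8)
The plan is to show that the spectrum of $\Delta$ acting on all of $\mathcal{D}(\Delta)$ contains $[h^2/4,\infty)$ by reducing to the radial case already handled in Lemma \ref{spectrum of radial laplacian}. The key observation is that $\mathrm{L}_o^2(M) \subseteq \mathrm{L}^2(M)$ is a closed subspace which is \emph{invariant} under $\Delta$ (more precisely, under the resolvent of $\Delta$): indeed, the radialisation operator $R_o$ is the orthogonal projection of $\mathrm{L}^2(M)$ onto $\mathrm{L}_o^2(M)$, and since $M$ is harmonic, $R_o$ commutes with $\Delta$, hence with every bounded Borel function of $\Delta$, in particular with the resolvent $(\Delta - z)^{-1}$ for $z \notin \sigma(\Delta)$. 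A standard fact from spectral theory is that if a self-adjoint operator $A$ leaves a closed subspace $\mathcal{K}$ invariant (in the sense that the resolvent does), then $\sigma(A|_{\mathcal{K}}) \subseteq \sigma(A)$.

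Concretely, first I would verify that $R_o$, extended to $\mathrm{L}^2(M)$, is precisely the orthogonal projection onto $\mathrm{L}_o^2(M)$; this is immediate since $R_o$ is an averaging operator over geodesic spheres, is idempotent, self-adjoint, and has range exactly the radial $\mathrm{L}^2$-functions. Next, using the characterisation of harmonicity that $R_o \Delta = \Delta R_o$ on $C_c^\infty(M)$, together with essential self-adjointness of $\Delta$, I would conclude that $R_o$ commutes with $\Delta$ on its full domain, and therefore with $(\Delta - z)^{-1}$ for all $z$ in the resolvent set. Then for any $\lambda \in [h^2/4,\infty)$: by Lemma \ref{spectrum of radial laplacian}, $\lambda \in \sigma(\Delta|_{\mathcal{D}(\Delta) \cap \mathrm{L}_o^2(M)})$, so there is a Weyl sequence $(f_n)$ of unit vectors in $\mathcal{D}(\Delta) \cap \mathrm{L}_o^2(M)$ with $\|(\Delta - \lambda)f_n\| \to 0$. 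But these $f_n$ lie in $\mathcal{D}(\Delta)$ and are unit vectors in $\mathrm{L}^2(M)$, so the very same sequence witnesses $\lambda \in \sigma(\Delta)$. This gives $[h^2/4,\infty) \subseteq \sigma(\Delta)$.

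The main obstacle, such as it is, is purely a domain/technical bookkeeping issue: one must be careful that the Weyl sequence for the restricted operator is genuinely a Weyl sequence for the global operator, which requires knowing that $\mathrm{L}_o^2(M) \cap \mathcal{D}(\Delta)$ is a reducing (not merely invariant) subspace so that the spectrum of the restriction sits inside the global spectrum. This is exactly guaranteed by the commutation of the orthogonal projection $R_o$ with the resolvent of $\Delta$, which in turn is the content of the harmonicity hypothesis. No curvature estimates or geometry at infinity are needed here — the work was already done in establishing Lemma \ref{spectrum of radial laplacian}; this lemma is the elementary functional-analytic step that upgrades the radial statement to the full statement. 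Combined with Theorem \ref{spectrum of asymptotically harmonic manifolds} and Corollary \ref{harmabs}, which give the reverse inclusion $\sigma(\Delta) \subseteq [h^2/4,\infty)$, this will complete the determination $\sigma(\Delta) = [h^2/4,\infty)$.
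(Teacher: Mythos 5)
Your proposal is correct and follows essentially the same route as the paper: both arguments rest on the commutation of the radialisation operator $R_o$ with $\Delta$ (the harmonicity characterisation), which makes $\mathrm{L}_o^2(M)$ a reducing subspace, so that the spectrum of the restricted operator from Lemma \ref{spectrum of radial laplacian} is contained in $\sigma(\Delta)$. The only cosmetic difference is that you verify the inclusion $\sigma(\Delta|_{\mathcal{D}(\Delta)\cap \mathrm{L}_o^2(M)})\subseteq\sigma(\Delta)$ via Weyl sequences, whereas the paper argues contrapositively with the resolvent, showing that if $\Delta-\mu$ is boundedly invertible on $\mathrm{L}^2(M)$ then it remains so on the radial subspace.
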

    \begin{proof}
    If $\mu$ is not in $\sigma(\Delta)$, then $\Delta-\mu:\mathcal{D}(\Delta) \to {\rm{L}}^{2}(M)$ is invertible with bounded inverse. In particular, $\Delta - \mu$ is injective on $\mathcal{D}(\Delta) \cap {\rm{L}}_{o}^{2}(M)$.  If $f \in {\rm{L}}_{o}^{2}(M)$, then there exists a unique $g$ in $\mathcal{D}(\Delta)$ such that $(\Delta-\mu)g=f$. We have
        \begin{equation*}
          (\Delta-\mu)g = f  = R_o f = R_o(\Delta -\mu)g = (\Delta-\mu)R_og 
        \end{equation*}
    and since $\Delta - \mu$ is invertible we have $R_o g =g$ i.e., $\Delta-\mu$ is surjective from $\mathcal{D}(\Delta) \cap {\rm{L}}_{o}^{2}(M)$ onto ${\rm{L}}_{o}^{2}(M)$. Since $(\Delta-\mu)^{-1}$ is bounded on ${\rm{L}}^2(M)$ and ${\rm{L}}_{o}^{2}(M)$ is a closed subspace, $(\Delta-\mu)^{-1}$ is also bounded on $\mathcal{D}(\Delta)\cap {\rm{L}}_{o}^{2}(M)$. The claim now follows from Lemma \ref{spectrum of radial laplacian}.
        \end{proof}
        
    \noindent The equality of the spectrum is obtained from Corollary \ref{harmabs} and Lemma \ref{inclusion of spectrum}. In summary, we have the following proposition.
        
 	\begin{proposition}\label{harspec}
 	    Let $M$ be a simply connected noncompact harmonic manifold without focal points. Then the spectrum of Laplacian is purely absolutely continuous and is equal to $[H^2/4,\infty)$.
 	\end{proposition}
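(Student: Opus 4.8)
The plan is to split the statement into two halves: that $\sigma(\Delta)$ is purely absolutely continuous and contained in $[h^2/4,\infty)$, and that conversely $[h^2/4,\infty)\subseteq\sigma(\Delta)$. The first half is exactly Corollary \ref{harmabs}, so the work there is only to check that its hypotheses hold automatically for $M$. Indeed, $M$ is simply connected without focal points by assumption; being nonflat and harmonic it has constant Ricci curvature $\kappa<0$ by the dichotomy recalled before Corollary \ref{harmabs}, so the negative Ricci curvature required in Theorem \ref{strictly convex function} is free; and harmonicity gives $\Delta b_v=h$ for all $v\in SM$, hence $M$ is asymptotically harmonic with horospheres of constant mean curvature $h$. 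Thus Theorem \ref{strictly convex function} produces the strictly convex $F=\int_{M(\infty)}b_v\,\der{d}\eta(v)$ with bounded gradient, Lemma \ref{bounded laplacian and bilaplacian property} gives $\Delta F=h$ and $\Delta^2F=0$, and Theorem \ref{spectrum of asymptotically harmonic manifolds} (via \cite[Theorem 2]{Xavier:1988}) yields pure absolute continuity together with $\sigma(\Delta)\subseteq[h^2/4,\infty)$.

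For the reverse inclusion I would invoke Lemma \ref{inclusion of spectrum}, whose proof is already assembled from the preceding material: harmonic manifolds without focal points have purely exponential volume growth by \cite[Theorems 32, 45]{Knieper:2016}, so by \cite[Theorem 4.7]{Biswas:2021} the spherical Fourier transform $\F^o$ is unitary from $\mathrm L^2_o(M)$ onto $\mathrm L^2([0,\infty),C_0|c(\lambda)|^{-2}\der{d}\lambda)$ and intertwines the radial Laplacian with multiplication by $m(\lambda)=\lambda^2+\tfrac{h^2}{4}$. Computing the essential range of $m$ with respect to the Plancherel measure — which reduces to checking that $m^{-1}(x-\epsilon,x+\epsilon)$ has positive measure for every $x\ge h^2/4$ and every $\epsilon>0$, and is eventually empty for $x<h^2/4$ — gives $\sigma(\Delta|_{\mathrm L^2_o(M)})=[h^2/4,\infty)$ (Lemma \ref{spectrum of radial laplacian}). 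Finally, the defining property of a harmonic manifold, namely that the radialisation $R_o$ commutes with $\Delta$, forces the resolvent $(\Delta-\mu)^{-1}$ to preserve the closed subspace $\mathrm L^2_o(M)$; hence if $\mu\notin\sigma(\Delta)$ then $\Delta-\mu$ is boundedly invertible on $\mathrm L^2_o(M)$ as well, contradicting Lemma \ref{spectrum of radial laplacian} when $\mu\in[h^2/4,\infty)$.

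Combining the two inclusions yields $\sigma(\Delta)=[h^2/4,\infty)$, and since the first half already gives pure absolute continuity, the proof of Proposition \ref{harspec} is complete; the only case not directly covered is $\dim M\le 5$, where the unitarity in \cite{Biswas:2021} is not available, but there $M$ is a rank one symmetric space of noncompact type, for which the conclusion is classical and discussed at the end of the article. I expect the genuinely delicate step to be the essential-range computation, i.e.\ ensuring via the lower bounds on $|c(\lambda)|^{-1}$ (both the $\lambda\lesssim|c(\lambda)|^{-1}$ behaviour near $0$ and the $\lambda^{(n-1)/2}\lesssim|c(\lambda)|^{-1}$ behaviour near infinity) that the Plancherel density does not vanish on any subinterval, so that no point of $[h^2/4,\infty)$ escapes the spectrum; everything else is bookkeeping with self-adjoint domains and the harmonicity-induced orthogonal splitting.
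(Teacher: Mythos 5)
Your proposal is correct and follows essentially the same route as the paper: the containment and pure absolute continuity come from Corollary \ref{harmabs} (via Theorem \ref{strictly convex function}, Lemma \ref{bounded laplacian and bilaplacian property} and \cite[Theorem 2]{Xavier:1988}), while the reverse inclusion is exactly Lemma \ref{inclusion of spectrum}, proved through the unitarity of the radial Fourier transform, the essential-range computation for $m(\lambda)=\lambda^2+\tfrac{h^2}{4}$ against the Plancherel density $C_0|c(\lambda)|^{-2}\,\der{d}\lambda$, and the commutation of the radialisation $R_o$ with $\Delta$. You also correctly flag the low-dimensional caveat and identify the $c$-function lower bounds as the delicate point, just as the paper does.
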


    \section{Isoperimetric constant and volume entropy}

   \noindent We consider here more general class of manifolds without conjugate points than in the preceding sections. In \S 3, we studied how the volumes of geodesic balls evolve with increasing radii. The exponential growth of volumes of geodesic balls in $M$ is measured by the volume entropy of $M$, defined by
          \begin{equation}\label{volume entropy}
             h_{vol}(M):= \limsup\limits_{r \to \infty}\frac{\log(\mathrm{Vol}(B_o(r)))}{r}
         \end{equation}
    Triangle inequality implies that $h_{vol}(M)$ does not depend on the choice of centres of the balls and hence is an invariant of $M$. For manifolds of infinite volume, the following fundamental inequality relating the volume entropy and the bottom of the essential spectrum  was shown in \cite[Theorem 1]{Brooks:1981}:
    \begin{equation*}
        \lambda_{0}^{ess}(M) \leq h_{vol}^2(M)/4.
    \end{equation*}
    In view of Cheeger's inequality, we therefore have 
    \begin{equation*}
        h_{Cheeger}^{2}/4 \leq \lambda_0(M) \leq \lambda_{0}^{ess}(M) \leq h_{vol}^2(M)/4.
    \end{equation*}
    Each of these inequalities can be strict and so it is interesting to study when equality holds in all of them. It has been shown that $h_{Cheeger}(M) = h_{vol}(M)$, for example, if $M$ is asymptotically harmonic and has bounded asymptote \cite[Theorem 5.4]{KnieperPeyerimhoff:2015} or if $M$ is a symmetric space of noncompact type \cite[Theorem 1]{Wang:2015}. We address this question below in a setting of greater generality which includes the above classes of examples. Recall from \eqref{supremum of mean curvatures} that $H \leq h_{Cheeger}(M)$ for manifolds of continuous asymptote in which horospheres corresponding to each $v \in S_oM$ all have the same constant mean curvature $H(v)$. We now proceed to show equality of Cheeger's constant and the volume entropy if $M$ is with bounded asymptote.

    \begin{theorem}\label{cheeger and bottom}
         Let $M$ be a manifold with bounded asymptote with bound $k$ as in \eqref{bounded asymptote}. Suppose there exists $o \in M$ such that the horospheres corresponding to each $v \in S_oM$ all have the same constant mean curvature denoted by $H(v) \geq 0$ with $H=\sup\limits_{v\in S_oM}H(v)$. Then we have
         \begin{itemize}
             \item [(i)]  $h_{Cheeger}(M)=H=h_{vol}(M)$. 
             \item[(ii)]  $\sigma(\Delta) \subset [H^2/4,\infty)$ with $\lambda_0(M)=\lambda_{0}^{ess}(M) = H^2/4$. 
         \end{itemize}
     \end{theorem} 

    \begin{proof} We first show that the volume of $M$ is infinite. For $v\in S_oM$, consider the Lagrange tensor $A_v$ along $\gamma_{_v}$ given by the initial conditions $A_v(0)=0$ and $A_{v}'(0)=\mathrm{Id}$ . Since $A_v$ satisfies the Jacobi equation \eqref{Jacobi tensor equation}, its Taylor expansion about $t=0$ yields
    \begin{equation*}
        A_v(t) = A_{v}(0)+tA_{v}'(0)+\frac{t^2}{2}A_{v}''(0)+o(t^2),\quad \text{as}~ t \to 0^+.
    \end{equation*}
    The Jacobi equation and the initial conditions yield
    \begin{equation*}
          A_v(t) = t \left( \mathrm{Id}+o(t)\right) \quad \text{as}~ t \to 0^+.
    \end{equation*}
    By continuity, there exists $\eta >0$ such that $\det(A_v(t)) > 0$ for $0<t < \eta$. As $\gamma_{_v}$ contains no conjugate points, the Lagrange tensor $A_v(t)$ is nonsingular for $t \neq 0$, and so $\det(A_v(t)) > 0$ for all $t >0$. From \cite[proof of Proposition 2]{Eschenburg:1977} we have for any $s > 0$
   \begin{equation}\label{comparing mean curvature of sphere and horosphere}
        (-D_{s,-\phi^{-t}(v)}'(0)) - (-D_{-\phi^{-t}(v)}'(0)) > 0,
   \end{equation} 
   and since $\mathrm{trace}(-D_{-\phi^{-t}(v)}'(0))= H(-v)$ for each $t > 0$, using \cite[p. 151, 155]{Knieper:2016}
   \begin{equation}\label{determinant of Lagrange tensor}
   \det(A_v(t))'  = \mathrm{trace}(A_{v}'(t)A_{v}^{-1}(t))\det(A_v(t)) = \mathrm{trace}(-D_{t,-\phi^{-t}(v)}'(0))\det(A_v(t)) > 0.
   \end{equation}
   It follows that for each $v \in S_oM$, $\det(A_v(t))$ is monotonically increasing and so for $0 < t_1 < t_ 2$
   \begin{equation*}
       \mathrm{Area}(S_o(t_1))= \int\limits_{S_oM}\det(A_v(t_1))d\theta_o(v) < \int\limits_{S_oM}\det(A_v(t_2))d\theta_o(v) =\mathrm{Area}(S_o(t_2)).
    \end{equation*}
    Now for $r \gg 1$,
   \begin{equation*}
       \mathrm{Vol}(B_o(r))  = \int\limits_{0}^{r}\mathrm{Area}(S_o(t)) dt > \int\limits_{1}^{r}\mathrm{Area}(S_o(t))dt > \int\limits_{1}^{r}\mathrm{Area}(S_o(1)) dt = \mathrm{Area}(S_o(1)) (r-1) 
   \end{equation*}
    and therefore, the volume of $M$ is infinite. We remark here that the arguments still holds if we replace manifold with bounded asymptote by those of continuous asymptote. \\
    
    \noindent Since $\mathrm{Vol}(M)$ is infinite, if $h_{vol}(M) \leq H$, then we have from  \cite[Theorem 1]{Brooks:1981} that 
     \begin{equation*}
        \lambda_{0}^{ess}(M) \leq h_{vol}^2(M)/4 \leq H^2/4.
     \end{equation*}
     It follows from \eqref{cheeger inequality} and \eqref{supremum of mean curvatures} that $h_{Cheeger}(M)=H=h_{vol}(M)$, $\lambda_0(M)=\lambda_{0}^{ess}(M)=H^2/4$ and consequently that $\sigma(\Delta_M)\subseteq [H^2/4, \infty)$. \\
     
     \noindent So it only remains to show that $h_{vol}(M) \leq H$. To this end, we first make the following observation from calculus \cite[Theorem II]{Taylor:1952}. Let $f$ and $g$ be real-valued differentiable functions on $(0,\infty)$. Suppose that
    \begin{itemize}
        \item [(a)] $g(x), ~ g'(x) \neq 0$ for all $x \in (0,\infty)$
        \item[(b)] $\lim\limits_{x \to \infty}|g(x)|=\infty$.
    \end{itemize}
    Then 
    \begin{equation}\label{lhopital}
         \limsup\limits_{x \to \infty}\frac{f(x)}{g(x)} \leq \limsup\limits_{x \to \infty}\frac{f'(x)}{g'(x)}
    \end{equation}
    Invoking \eqref{lhopital} for the following pairs of functions
   \begin{itemize}
       \item [(i)] $f(r)=\log(\mathrm{Vol}(B_o(r)))$ and $g(r)=r$ defined on $(0,\infty)$ and
       \item [(ii)] $f(r)=\mathrm{Area}(S_o(r))$ and $g(r)=\mathrm{Vol}(B_o(r))$ defined on $(0,\infty)$. 
   \end{itemize}
   we have that
   \begin{align*}
        h_{vol}(M) & = \limsup\limits_{r \to \infty}\frac{\log(\mathrm{Vol}(B_o(r)))}{r}  \leq \limsup\limits_{r \to \infty}\frac{\mathrm{Area}(S_o(r))}{\mathrm{Vol}(B_o(r))} = \limsup\limits_{r \to \infty} ~ \frac{\displaystyle{\int\limits_{S_oM}\det(A_v(r))d\theta_o(v)}}{\displaystyle{\int\limits_{0}^{r}\int\limits_{S_oM}\det(A_v(t))d\theta_o(v)\mathrm{d}t}}  \\
        & \overset{\eqref{determinant of Lagrange tensor}} \leq  \limsup\limits_{r \to \infty} ~ \frac{\displaystyle{\int\limits_{S_oM}\mathrm{trace}(A_{v}'(r)A_{v}^{-1}(r))\det(A_v(r))d\theta_o(v)}}{\displaystyle{\int\limits_{S_oM}\det(A_v(r))d\theta_o(v)}}  \\  
        & \overset{\eqref{determinant of Lagrange tensor}} = \limsup\limits_{r \to \infty} ~ \frac{\displaystyle{\int\limits_{S_oM}\mathrm{trace}(-D_{r,-\phi^r(v)}'(0))\det(A_v(r))d\theta_o(v)}}{\displaystyle{\int\limits_{S_oM}\det(A_v(r))d\theta_o(v)}} 
   \end{align*}
   \begin{align}\label{uniform estimate}
         & \leq \limsup\limits_{r \to \infty} ~ \frac{\displaystyle{\int\limits_{S_oM}\left[\mathrm{trace}(-D_{r,-\phi^r(v)}'(0)) - H(-v)\right]}\det(A_v(r))d\theta_o(v)}{\displaystyle{\int\limits_{S_oM}\det(A_v(r))d\theta_o(v)}} \nonumber \\
         & \hspace*{20mm} + \limsup\limits_{r \to \infty} ~ \frac{\displaystyle{\int\limits_{S_oM} H(-v)}\det(A_v(r))d\theta_o(v)}{\displaystyle{\int\limits_{S_oM}\det(A_v(r))d\theta_o(v)}} \nonumber \\
        & \leq \limsup\limits_{r \to \infty} ~ \frac{\displaystyle{\int\limits_{S_oM}\left[\mathrm{trace}(-D_{r,-\phi^r(v)}'(0)) - H(-v)\right]}\det(A_v(r))d\theta_o(v)}{\displaystyle{\int\limits_{S_oM}\det(A_v(r))d\theta_o(v)}} + H. 
    \end{align}
    Observe that for each $v\in S_oM$ and $r > 0$, we have $\mathrm{trace}(-D_{-\phi^r(v)}'(0))= H(-v)$ and so using \cite[Proposition 5]{Eschenburg:1977} we have
    \begin{align*}
       \mathrm{trace}(-D_{r,-\phi^r(v)}'(0)) - \mathrm{trace}(-D_{-\phi^r(v)}'(0)) & \leq (n-1)\norm{(-D_{r,-\phi^r(v)}'(0)) - (-D_{-\phi^r(v)}'(0))} \\
        & \leq \frac{(n-1) k^2}{r}.
    \end{align*}
    It follows from \eqref{uniform estimate} that $h_{vol}(M) \leq H$, concluding the proof of the theorem. 
    \end{proof}

\section{Funding}

\noindent This work was supported by the Centre for Operator Algebras, Geometry, Matter and Spacetime, Ministry of Education, Government of India through Indian Institute of Technology Madras [SB22231267MAETWO008573 to A.P. and S.B.]; and by the National Board for Higher Mathematics, Department of Atomic Energy, Government of India [Ref.No:0203/11/2019-R\&D-II to S.B.]

\bibliography{references}
\bibliographystyle{amsalpha}

\end{document}